\newcommand{\bc}{\begin{center}}
\newcommand{\ec}{\end{center}}
\newcommand{\cc}{\textrm{cc}}
\newcommand{\cy}{\textrm{cy}}
\newcommand{\ex}{\textrm{ex}}
\newtheorem{theorem}{Theorem}
\newtheorem{proposition}{Proposition}
\newtheorem{corollary}{Corollary}
\newtheorem{lemma}{Lemma}
\newtheorem{defi}{Definition}
\begin{document}


\bc
{\Large
On global location-domination in bipartite graphs}
\ec

\bc
{\large
Carmen Hernando, Merce Mora, Ignacio M. Pelayo}
\ec

\bc
{\normalsize
Universitat Politècnica de Catalunya, Barcelona, Spain}
\ec






\author{}


\begin{abstract}
\small
A dominating set $S$  of a graph $G$ is called \emph{locating-dominating}, \emph{LD-set} for short, if every vertex $v$ not in  $S$ is uniquely determined by the set of neighbors of $v$ belonging to $S$.
Locating-dominating sets of minimum cardinality are called $LD$-codes and the cardinality of an LD-code is the \emph{location-domination number} $\lambda(G)$.
An LD-set $S$ of a graph $G$ is  \emph{global} if it is an LD-set of both $G$ and its complement $\overline{G}$.
The   \emph{global location-domination number} $\lambda_g(G)$ is the minimum cardinality of a global LD-set of $G$.

For any LD-set $S$ of a given graph $G$, the so-called \emph{S-associated graph} $G^S$ is introduced. This edge-labeled bipartite graph turns out to be very helpful to approach the study of LD-sets in graphs, particularly when $G$ is bipartite.

This paper is mainly devoted  to the study of relationships between  global LD-sets, LD-codes and the location-domination number in a graph $G$ and its complement $\overline{G}$, when $G$ is bipartite.

\vspace{.3cm}
\noindent {\bf Keywords}: \small Domination, Global domination, Locating domination, Complement graph, Bipartite graph.
\end{abstract}


\section{Introduction}

Let  $G=(V,E)$ be a simple, finite graph.
The \emph{open neighborhood} of a vertex $v\in V$ is $N_G(v)=\{u\in V : uv\in E\}$.
The \emph{complement} of a graph $G$,  denoted by $\overline{G}$, is the  graph on the same vertices such that two vertices are adjacent in $\overline{G}$ if and only if they are not adjacent in $G$.
The distance between vertices $v,w\in V$ is denoted by $d_G(v,w)$.
We write $N(u)$ or $d(v,w)$ if the graph G is clear from the context.
Given any pair of sets  $A$ and $B$,  $A\bigtriangleup B $ denotes its symmetric difference, that is, $(A\setminus B) \cup (B \setminus A)$.
For further notation and terminology , we refer the reader to \cite{chlezh11}.

A set $D\subseteq V$ is a \emph{dominating set} if for every vertex $v\in V\setminus D$, $N(v)\cap D\neq\emptyset$.
The \emph{domination number} of $G$, denoted by $\gamma(G)$, is the minimum cardinality of a dominating set of $G$ \cite{hahesl}.
A dominating set is \emph{global} if it is a dominating set of both $G$ and its complement graph, $\overline{G}$.
The minimum cardinality of a global dominating set of $G$, denoted by $\gamma_g (G)$, is the \emph{global domination number} of $G$ \cite{brca98,brdu90,sam89}.
If $D$ is a subset of $V$ and $v\in V\setminus D$, we say that  $v$ \emph{dominates}  $D$ if $D\subseteq N(v)$.

A dominating set $S\subseteq V$ is  a \emph{locating-dominating set}, \emph{LD-set} for short, if for every two different vertices $u,v\in V\setminus S$, $N(u)\cap S\neq N(v)\cap S$.
The \emph{location-domination number} of $G$,   denoted by $\lambda (G)$, is the  minimum cardinality of a locating-dominating set.
A locating-dominating set of cardinality $\lambda(G)$ is called an \emph{LD-code}~\cite{rasl84,slater88}.
Certainly,  every LD-set of a non-connected graph $G$ is  the  union of LD-sets of its connected components and the location-domination number is the sum of the location-domination number of its connected components.
LD-codes and the location-domination parameter have been intensively studied during the last decade;
see \cite{bchl,bcmms07,cahemopepu12,clm11,hola06,ours3}.
A complete and regularly updated list of papers on locating-dominating codes is to be found in \cite{lobstein}.


The remaining part of this paper is organized as follows.
In Section 2,  we deal with the problem of approaching the relationship between $\lambda (G)$ and $\lambda (\overline{G})$, for any arbitrary graph $G$.
In Section 3, we introduce the so-called  \emph{LD-set-associated graph} $G^S$, which is an edge-labeled bipartite graph constructed from an arbitrary LD-set $S$  of a given graph $G$,  and show some basic properties of this graph.
Finally, Section 4 is  concerned with the study of relationships between the location-domination number $\lambda (G)$ of a bipartite graph $G$ and the location-domination number $\lambda (\overline{G})$ of its complement $\overline{G}$.

\section{General case}

This section is devoted to approach the relationship between $\lambda (G)$ and $\lambda (\overline{G})$, for any arbitrary graph $G$.
Some of the results we present were previously shown  in \cite{ours3,oursglobal} and we include them for the sake of completeness.


Notice that $N_{\overline{G}}(x)\cap S = S\setminus N_{G}(x)$ for any set $S\subseteq V$ and any vertex $x\in V\setminus S$.
A straightforward consequence of this fact are the following results.

\begin{proposition} [\cite{oursglobal}]\label{pro.domi}
If $S\subseteq V$ is an LD-set of a graph $G=(V,E)$, then $S$ is an LD-set of $\overline{G}$ if and only if $S$ is a dominating set of $\overline{G}$.
\end{proposition}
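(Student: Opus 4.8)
The plan is to exploit the displayed identity $N_{\overline{G}}(x)\cap S = S\setminus N_G(x)$, valid for every $x\in V\setminus S$, in order to translate statements about neighborhoods in $\overline{G}$ into statements about neighborhoods in $G$. Recall that, by definition, an LD-set is a dominating set that in addition satisfies the locating condition; so the content of the proposition is that, for an $S$ already known to be an LD-set of $G$, the locating condition in $\overline{G}$ comes for free and domination of $\overline{G}$ is the sole extra requirement.

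The forward implication is immediate: every LD-set is in particular a dominating set, so if $S$ is an LD-set of $\overline{G}$ then it is a dominating set of $\overline{G}$.

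For the converse, assume $S$ is a dominating set of $\overline{G}$; since $S$ is already an LD-set of $G$, it is also a dominating set of $G$, and it only remains to check the locating condition in $\overline{G}$. Take two distinct vertices $u,v\in V\setminus S$ and suppose, toward a contradiction, that $N_{\overline{G}}(u)\cap S = N_{\overline{G}}(v)\cap S$. By the identity this reads $S\setminus N_G(u) = S\setminus N_G(v)$; taking complements relative to the set $S$ yields $N_G(u)\cap S = N_G(v)\cap S$, which contradicts the hypothesis that $S$ is an LD-set of $G$. Hence $S$ separates every pair of vertices of $V\setminus S$ in $\overline{G}$, and together with the domination hypothesis this establishes that $S$ is an LD-set of $\overline{G}$.

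There is essentially no hard step: the whole argument is a short set-theoretic manipulation. The only point worth flagging is that relative complementation inside the fixed set $S$ is an involution, hence preserves distinctness of the traces $N(\cdot)\cap S$; this is precisely why the locating half of the LD-condition transfers automatically from $G$ to $\overline{G}$, leaving domination of $\overline{G}$ as the unique possible obstruction — exactly what the statement isolates.
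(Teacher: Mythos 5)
Your proof is correct and follows exactly the route the paper intends: the proposition is stated there as a straightforward consequence of the identity $N_{\overline{G}}(x)\cap S = S\setminus N_G(x)$ for $x\in V\setminus S$, which is precisely the observation you use to transfer the locating condition from $G$ to $\overline{G}$, leaving domination of $\overline{G}$ as the only extra requirement.
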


\begin{proposition} [\cite{ours3}]\label{pro.vertexdom}
Let  $S\subseteq V$ be an LD-set of a graph $G=(V,E)$. Then, the following holds.
\begin{itemize}
\item[(a)]  There is at most one vertex $u\in V\setminus S$ dominating $S$, and in the case it exists, $S\cup \{ u \}$ is an LD-set of $\overline{G}$.
\item[(b)] $S$ is an LD-set of $\overline{G}$ if and only if there is no vertex in $V\setminus S$ dominating $S$ in $G$.
\end{itemize} 
\end{proposition}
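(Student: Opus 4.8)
The key observation, already noted in the excerpt, is that for any $x \in V \setminus S$ we have $N_{\overline{G}}(x) \cap S = S \setminus N_G(x)$. In particular, a vertex $u \in V \setminus S$ dominates $S$ in $G$ (meaning $S \subseteq N_G(u)$) precisely when $N_{\overline{G}}(u) \cap S = \emptyset$, i.e.\ $u$ has no neighbor of $S$ in $\overline{G}$. So the statement is really a translation, via complementation, of two simple facts about LD-sets: that an LD-set $S$ leaves at most one vertex with an empty ``$S$-trace'', and that $S$ fails to dominate $\overline{G}$ exactly when some vertex has empty $S$-trace in $\overline{G}$.

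For part (a): since $S$ is an LD-set of $G$, the sets $N_G(v) \cap S$ for $v \in V \setminus S$ are pairwise distinct, hence at most one of them can equal the full set $S$; that is, at most one vertex $u \in V \setminus S$ dominates $S$ in $G$. Now suppose such a $u$ exists. I claim $S \cup \{u\}$ is an LD-set of $\overline{G}$. It is dominating in $\overline{G}$: any vertex $v \in V \setminus (S \cup \{u\})$ satisfies $v \neq u$, so $N_G(v) \cap S \neq N_G(u) \cap S = S$, meaning $S \setminus N_G(v) = N_{\overline{G}}(v) \cap S \neq \emptyset$, so $v$ has a neighbor in $S$ in $\overline{G}$; and $u$ is adjacent in $\overline{G}$ to every vertex of $V \setminus (S\cup\{u\})$ that is nonadjacent to it in $G$ — but more simply, since $u$ had no non-neighbors among $V\setminus S$ other than... here I must be slightly careful, so instead I argue domination of $u$ directly: if $u$ is isolated in $\overline G$ restricted appropriately, note $u$ is dominated in $\overline G$ by any vertex not adjacent to it in $G$; since $G$ is simple and $u$ dominates $S$, if $|V\setminus S|\ge 2$ then the second vertex $w$ of $V\setminus S$ is non-adjacent to $u$ in $G$ or we reach the trivial small case, so $w$ dominates $u$ in $\overline G$. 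For location in $\overline{G}$: take two distinct $v, v' \in V \setminus (S \cup \{u\})$. Their traces on $S$ in $\overline{G}$ are $S \setminus N_G(v)$ and $S \setminus N_G(v')$; these are distinct because the complements $N_G(v)\cap S$ and $N_G(v')\cap S$ inside $S$ are distinct. Finally, is adding $u$ to the resolving set needed to separate $v$ from $u$? No issue: $u \in S \cup \{u\}$, so only pairs in $V \setminus (S\cup\{u\})$ must be separated. Thus $S \cup \{u\}$ is an LD-set of $\overline{G}$.

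For part (b): by Proposition~\ref{pro.domi}, $S$ is an LD-set of $\overline{G}$ iff $S$ is a dominating set of $\overline{G}$. And $S$ fails to dominate $\overline{G}$ iff there is some $x \in V \setminus S$ with $N_{\overline{G}}(x) \cap S = \emptyset$, i.e.\ with $S \setminus N_G(x) = \emptyset$, i.e.\ with $S \subseteq N_G(x)$ — exactly a vertex dominating $S$ in $G$. Combining the two equivalences gives the claim. The only genuinely delicate point in the whole argument is making sure, in part (a), that the new vertex $u$ is itself dominated in $\overline{G}$ and need not be separated from anything; both are handled by the remarks above, and everything else is the routine dictionary $N_{\overline{G}}(x)\cap S = S \setminus N_G(x)$.
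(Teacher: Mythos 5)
Your argument is correct and follows exactly the route the paper intends: everything reduces to the identity $N_{\overline{G}}(x)\cap S = S\setminus N_{G}(x)$, with part (a) using distinctness of the $S$-traces and part (b) combining that identity with Proposition~\ref{pro.domi} (the paper states the result as a straightforward consequence of this identity, citing its earlier work). The only blemish is the muddled digression about whether $u$ is dominated in $\overline{G}$: it is unnecessary (and its claim that some $w\in V\setminus S$ is non-adjacent to $u$ in $G$ is unjustified), since $u$ belongs to $S\cup\{u\}$ and the definition of a dominating set only constrains vertices outside the set, as you yourself note when treating the location condition.
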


The following theorem is a consequence of the preceding propositions.

\begin{theorem} [\cite{ours3}]\label{cor.difuno}
For every graph $G$, $|\lambda (G) -\lambda (\overline{G})|\le 1$.
\end{theorem}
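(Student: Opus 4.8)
The plan is to derive Theorem~\ref{cor.difuno} directly from Proposition~\ref{pro.vertexdom}(a), exploiting the symmetry between $G$ and $\overline{G}$. By symmetry it suffices to prove $\lambda(\overline{G}) \le \lambda(G) + 1$; applying the same inequality with $G$ replaced by $\overline{G}$ (and using $\overline{\overline{G}} = G$) then yields $\lambda(G) \le \lambda(\overline{G}) + 1$, and the two together give $|\lambda(G) - \lambda(\overline{G})| \le 1$.

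To show $\lambda(\overline{G}) \le \lambda(G) + 1$, I would start from an LD-code $S$ of $G$, so $|S| = \lambda(G)$. There are two cases. If $S$ is already an LD-set of $\overline{G}$, then $\lambda(\overline{G}) \le |S| = \lambda(G) \le \lambda(G) + 1$ and we are done. Otherwise, by Proposition~\ref{pro.vertexdom}(b), there exists a vertex $u \in V \setminus S$ dominating $S$ in $G$; then Proposition~\ref{pro.vertexdom}(a) guarantees that $S \cup \{u\}$ is an LD-set of $\overline{G}$, so $\lambda(\overline{G}) \le |S \cup \{u\}| = \lambda(G) + 1$. Either way the desired bound holds.

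The argument is essentially a two-line deduction once the earlier propositions are in hand, so there is no serious obstacle; the only point requiring a moment's care is the symmetry step, namely checking that the hypotheses of Proposition~\ref{pro.vertexdom} apply equally to $\overline{G}$ (they do, since the statement is for an arbitrary graph and $\overline{\overline{G}} = G$) and that an LD-code of $\overline{G}$ can be used in place of an LD-code of $G$ in the same case analysis. No nontrivial calculation is involved beyond invoking Proposition~\ref{pro.vertexdom}.
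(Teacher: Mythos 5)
Your argument is correct and is exactly the deduction the paper intends: it states the theorem as "a consequence of the preceding propositions," and your use of Proposition~\ref{pro.vertexdom} to get $\lambda(\overline{G})\le\lambda(G)+1$ plus the symmetry $\overline{\overline{G}}=G$ is that same route, just written out in full.
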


According to the preceding inequality, for every graph $G$, $\lambda (\overline{G})\in\{\lambda (G)-1,\lambda (G),\lambda (G)+1\}$, all cases being  feasible for some connected graph $G$.
See  Table \ref{tab.valors} for some basic examples covering all possible cases.

We intend to obtain  either necessary or sufficient conditions for a graph $G$ to satisfy $\lambda (\overline{G})>\lambda (G)$, i.e., $\lambda (\overline{G})= \lambda (G) +1$.
This problem was approached and completely solved in \cite{oursglobal} for the family of block-cactus.
In this work, we carry out a similar study for bipartite graphs.
After noticing that solving the equality $\lambda (\overline{G})= \lambda (G) +1$ is closely related to analyzing the existence or not  of sets that are simultaneously locating-dominating sets in both $G$ and its complement  $\overline{G}$, the following definitions were introduced in \cite{oursglobal}.

\begin{defi} [\cite{oursglobal}] \rm
A set $S$ of vertices of a graph $G$  is a \emph{global LD-set} if $S$ is an LD-set of both $G$ and its complement $\overline{G}$.
The \emph{global location-domination number} of a graph $G$, denoted by $\lambda_g (G)$, is defined as the minimum cardinality of a global LD-set of $G$.
\end{defi}

According to Proposition \ref{pro.vertexdom}, an LD-set $S$ of a graph $G$ is non-global if and only if there exists a (unique)  vertex $u\in V(G)\setminus S$ which dominates $S$, i.e., such that $S\subseteq N(u)$.
Notice  that, for every graph $G$, $\lambda_g (\overline{G})=\lambda_g (G)$, 
since for  every set of vertices $S\subset V(G)=V(\overline{G})$, $S$ is a global LD-set of  $G$ if and only if it  is a global LD-set of   $\overline{G}$.
Observe also that an LD-code $S$ of $G$ is a global LD-set  if and only if it is both an LD-code of $G$ and an LD-set of $\overline{G}$.

\begin{theorem} [\cite{oursglobal}]\label{teoremon}
For any graph $G=(V,E)$,
$ \max \{ \lambda (G), \lambda (\overline{G}) \} \le \lambda_g (G)  \le \min \{ \lambda ({G})+1, \lambda (\overline{G})+1\}.$
Moreover,
\begin{itemize}

\item[(a)] If $\lambda (G)\not= \lambda (\overline{G})$, then $\lambda_g(G)=\max \{ \lambda (G), \lambda (\overline{G})\}$.

\item[(b)] If $\lambda (G)=\lambda (\overline{G})$, then $\lambda_g(G)\in \{ \lambda (G), \lambda (G) +1 \}$, and both possibilities are feasible.

\item[(c)] $\lambda_g (G) = \lambda ({G})+1$ if and only if every LD-code of $G$ is non-global.

\end{itemize}
\end{theorem}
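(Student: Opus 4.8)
The plan is to derive the whole statement from Propositions~\ref{pro.domi} and~\ref{pro.vertexdom}, from Theorem~\ref{cor.difuno}, and from one elementary fact that I would record at the outset: \emph{every superset of an LD-set of $G$ is again an LD-set of $G$}. Indeed, if $S\subseteq S'\subseteq V$ and $S$ is an LD-set, then $S'$ still dominates $V$, and for distinct $u,v\in V\setminus S'\subseteq V\setminus S$ we have $(N(u)\cap S')\cap S=N(u)\cap S\ne N(v)\cap S=(N(v)\cap S')\cap S$, so $N(u)\cap S'\ne N(v)\cap S'$. With this in hand, the lower bound in the displayed inequality is immediate, since a global LD-set is by definition an LD-set of both $G$ and $\overline{G}$, hence of cardinality at least $\lambda(G)$ and at least $\lambda(\overline{G})$. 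For the upper bound I would start from an LD-code $S$ of $G$: if $S$ is global there is nothing to do; otherwise Proposition~\ref{pro.vertexdom}(b) provides the (unique) vertex $u\in V\setminus S$ dominating $S$ in $G$, Proposition~\ref{pro.vertexdom}(a) makes $S\cup\{u\}$ an LD-set of $\overline{G}$, and the observation above keeps it an LD-set of $G$; thus $S\cup\{u\}$ is a global LD-set and $\lambda_g(G)\le\lambda(G)+1$. Since the family of global LD-sets of $G$ coincides with that of $\overline{G}$, one has $\lambda_g(G)=\lambda_g(\overline{G})$, so running the same argument on $\overline{G}$ yields $\lambda_g(G)\le\lambda(\overline{G})+1$ as well, giving the claimed $\min$.

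Parts~(a) and~(b) are then arithmetic bookkeeping against Theorem~\ref{cor.difuno}, i.e.\ against $\lambda(\overline{G})\in\{\lambda(G)-1,\lambda(G),\lambda(G)+1\}$. If $\lambda(G)\ne\lambda(\overline{G})$, then up to interchanging $G$ and $\overline{G}$ we may assume $\lambda(\overline{G})=\lambda(G)+1$; the lower bound reads $\lambda_g(G)\ge\lambda(G)+1$ and the upper bound reads $\lambda_g(G)\le\min\{\lambda(G)+1,\lambda(G)+2\}=\lambda(G)+1$, forcing $\lambda_g(G)=\lambda(G)+1=\max\{\lambda(G),\lambda(\overline{G})\}$, which is~(a). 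If $\lambda(G)=\lambda(\overline{G})$, the same two bounds collapse to $\lambda(G)\le\lambda_g(G)\le\lambda(G)+1$, which is the dichotomy in~(b); to show that both values occur it suffices to exhibit one graph of each kind, and the natural candidates are small cycles — one checks that $C_4$ (whose complement is $2K_2$) admits a global LD-code while every LD-code of $C_5$ is non-global, so that $\lambda_g(C_4)=\lambda(C_4)=2$ and $\lambda_g(C_5)=\lambda(C_5)+1=3$, with $\lambda(C_4)=\lambda(\overline{C_4})$ and $\lambda(C_5)=\lambda(\overline{C_5})$ in both cases (cf.\ Table~\ref{tab.valors}). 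Finally, for~(c): a global LD-set of cardinality exactly $\lambda(G)$ is precisely an LD-code of $G$ that is also an LD-set of $\overline{G}$, i.e.\ a global LD-code; since the upper bound already forces $\lambda_g(G)\in\{\lambda(G),\lambda(G)+1\}$, we conclude that $\lambda_g(G)=\lambda(G)+1$ if and only if $G$ has no global LD-code, that is, every LD-code of $G$ is non-global.

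I do not expect a genuine obstacle here: once Propositions~\ref{pro.domi}--\ref{pro.vertexdom} and Theorem~\ref{cor.difuno} are available, the proof is essentially an assembly of those ingredients, the only slightly non-obvious point being the (easy) monotonicity of the LD-property under passing to supersets, which is exactly what lets an LD-code together with its dominating vertex act as a global LD-set. The one place that calls for a small computation rather than pure logic is the feasibility assertion in~(b), where one has to verify the two concrete example graphs; short cycles such as $C_4$ and $C_5$ already do the job, as sketched above.
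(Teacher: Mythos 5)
Your proof is correct: the monotonicity observation, the use of Proposition~\ref{pro.vertexdom} to augment a non-global LD-code by its unique dominating vertex, the symmetry $\lambda_g(G)=\lambda_g(\overline{G})$, the case analysis against Theorem~\ref{cor.difuno}, and the verification of $C_4$ and $C_5$ for the feasibility claim in~(b) all check out. Note that the paper itself states this theorem without proof (it is quoted from~\cite{oursglobal}), and your assembly from Propositions~\ref{pro.domi}--\ref{pro.vertexdom} and Theorem~\ref{cor.difuno} is exactly the intended kind of argument, so there is nothing to object to.
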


\begin{corollary}\label{cori}
If $G$ is a graph with a global LD-code, then $\lambda (\overline{G})\le \lambda (G)$.
\end{corollary}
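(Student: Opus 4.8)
The plan is to unwind the definitions and observe that the statement is essentially immediate. Suppose $G$ has a global LD-code $S$. By definition of LD-code, $|S|=\lambda(G)$, and by definition of global LD-set, $S$ is an LD-set of $\overline{G}$ as well. Since $\lambda(\overline{G})$ is the minimum cardinality of an LD-set of $\overline{G}$, we get $\lambda(\overline{G})\le |S|=\lambda(G)$, which is exactly the claimed inequality. So the whole argument is a one-line consequence of the fact that exhibiting an LD-set of $\overline{G}$ of size $\lambda(G)$ bounds $\lambda(\overline{G})$ from above by $\lambda(G)$.

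Alternatively, I would derive it from Theorem \ref{teoremon}. If $G$ admits a global LD-code, then not every LD-code of $G$ is non-global, so by part (c) we cannot have $\lambda_g(G)=\lambda(G)+1$; combined with the general bounds $\lambda(G)\le\lambda_g(G)\le\lambda(G)+1$ from the same theorem, this forces $\lambda_g(G)=\lambda(G)$. Then the lower bound $\max\{\lambda(G),\lambda(\overline{G})\}\le\lambda_g(G)$ gives $\lambda(\overline{G})\le\lambda_g(G)=\lambda(G)$. I would probably present the first (direct) argument as the main proof and perhaps remark that it also follows from Theorem \ref{teoremon}(c).

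There is no real obstacle here: the result is a formal corollary, and the only thing to be careful about is invoking the correct definition of ``global LD-code'' (an LD-code of $G$ that is simultaneously an LD-set of $\overline{G}$, equivalently a global LD-set of $G$ of minimum LD-size in $G$), as recorded in the remark following the definition of the global location-domination number. Once that is in place, the inequality drops out with no computation.
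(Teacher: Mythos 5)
Your direct argument is correct and is essentially the intended one: a global LD-code is an LD-set of $\overline{G}$ of cardinality $\lambda(G)$, so $\lambda(\overline{G})\le\lambda(G)$, which is exactly how the corollary follows (the paper states it without proof as an immediate consequence of the surrounding results). Your alternative derivation via Theorem~\ref{teoremon} is also valid but unnecessary.
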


In Table \ref{tab.valors},  the location-domination number of some families of graphs is displayed, along with the location-domination number of its complement graphs and the global location-domination number.
Concretely, we consider the path $P_n$ of order  $n\ge7$; the cycle $C_n$ of order  $n\ge7$;  the wheel  $W_n$  of order  $n\ge8$, obtained by joining a new vertex to all vertices of a cycle of order $n-1$; the complete graph  $K_n$ of order  $n\ge2$; the complete bipartite graph $K_{r,n-r}$ of order  $n\ge4$, with $2\le r\le n-r$ and stable sets of order $r$ and $n-r$, respectively; the star $K_{1,n-1}$ of order  $n\ge4$, obtained by joining a new vertex to $n-1$ isolated vertices; and finally, the bi-star $K_2(r,s)$ of order $n\ge6$ with  $3\le r\le s=n-r$,  obtained by joining the central vertices of two stars $K_{1,r-1}$ and $K_{1,s-1}$ respectively.

\begin{proposition} [\cite{oursglobal}]\label{donosti}
 Let   $G$ be a graph of order $n$.
 If $G \in \{P_n,C_n,W_n,K_n,K_{1,n-1},K_{r,n-r},K_2(r,s)\}$,
then the values of $\lambda (G)$, $\lambda (\overline{G})$ and  $\lambda_g (G)$ are known and they are displayed in Table \ref{tab.valors}.
\end{proposition}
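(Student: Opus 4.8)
The plan is a finite case analysis over the seven families, running the same four-step routine for each graph $G$ in the list: first determine $\lambda(G)$; then identify the complement $\overline G$ explicitly, typically as a disjoint union of standard graphs; then determine $\lambda(\overline G)$ the same way; and finally read off $\lambda_g(G)$ from Theorem~\ref{teoremon}. The first and third steps come in two flavours. For the ``twin-rich'' members $K_n$, $K_{1,n-1}$, $K_{r,n-r}$, $K_2(r,s)$ (and their complements, which are disjoint unions of complete graphs, or close to it), I would rely on the elementary fact that if $W$ is a set of pairwise twins in a graph $H$ --- vertices sharing the same open, or the same closed, neighbourhood --- then every LD-set of $H$ contains at least $|W|-1$ members of $W$; and if moreover $W$ is independent with $|W|\ge 2$, the one vertex of $W$ that may be left out must still be dominated, forcing its common neighbour into the LD-set. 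For $P_n$, $C_n$ and $W_n$ with $n$ in the stated ranges I would quote the classical closed formulas for $\lambda(P_n)$, $\lambda(C_n)$ and $\lambda(W_n)$ \cite{rasl84,slater88,lobstein}.

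The complement identifications are immediate: $\overline{K_n}$ is edgeless, so $\lambda(\overline{K_n})=n$; $\overline{K_{1,n-1}}=K_{n-1}\cup K_1$ and $\overline{K_{r,n-r}}=K_r\cup K_{n-r}$, whence by additivity over components $\lambda(\overline{K_{1,n-1}})=(n-2)+1=n-1$ and $\lambda(\overline{K_{r,n-r}})=(r-1)+(n-r-1)=n-2$; $\overline{W_n}=K_1\cup\overline{C_{n-1}}$, so $\lambda(\overline{W_n})=1+\lambda(\overline{C_{n-1}})$; and $\overline{K_2(r,s)}$ is the clique $K_{n-2}$ on the $n-2$ leaves of $K_2(r,s)$ together with two further, mutually non-adjacent vertices $c_1,c_2$, each joined exactly to the leaf-class of the other centre. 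Feeding these into the twin argument (with an explicit LD-set of matching size for the upper bounds) I expect $\lambda(K_n)=n-1$, $\lambda(K_{1,n-1})=n-1$, $\lambda(K_{r,n-r})=n-2$, and $\lambda(K_2(r,s))=n-2$ with $\lambda(\overline{K_2(r,s)})=n-3$.

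For the last step I would split on whether $\lambda(G)$ and $\lambda(\overline G)$ coincide. When they differ --- the cases $K_n$, $K_2(r,s)$, and any unbalanced $P_n,C_n,W_n$ --- Theorem~\ref{teoremon}(a) forces $\lambda_g(G)=\max\{\lambda(G),\lambda(\overline G)\}$ immediately. When they agree, Theorem~\ref{teoremon}(b)--(c) leaves only the question whether some LD-code is global, which I would settle by producing one explicitly: for $K_{1,n-1}$, the set $V\setminus\{x\}$ with $x$ a leaf is an LD-code of $G$ and a dominating set of $\overline G$, hence a global LD-code by Proposition~\ref{pro.domi}, giving $\lambda_g=n-1$; for $K_{r,n-r}$, the set $V\setminus\{a,b\}$ with $a,b$ in opposite parts is an LD-code of both graphs, giving $\lambda_g=n-2$; for $P_n$ and $C_n$ with $n\ge 7$, Proposition~\ref{pro.vertexdom}(b) shows an LD-set $S$ can be non-global only if some $v\notin S$ has $S\subseteq N(v)$, which is impossible for $|S|\ge 3$ since all degrees are at most $2$, and as $\lambda(P_n),\lambda(C_n)\ge 3$ in this range every LD-code is global, so $\lambda_g=\lambda(G)$. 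The wheel needs a small twist --- its hub is adjacent to everything and therefore dominates any LD-code omitting it --- so I would instead exhibit an optimal LD-code that contains the hub and is large enough not to be dominated by a rim vertex, and use $\overline{W_n}=K_1\cup\overline{C_{n-1}}$ on the complement side.

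The one place I expect real work, rather than bookkeeping, is pinning down $\lambda(\overline{P_n})$ and $\lambda(\overline{C_n})$ (and hence $\lambda(\overline{W_n})$). The arguments above, together with Theorem~\ref{cor.difuno}, already confine each of these to a two-element set $\{\lambda(G)-1,\lambda(G)\}$; what remains is a matching lower bound, i.e.\ either ruling out or exhibiting an LD-set of the smaller size in the dense graphs $\overline{P_n}$, $\overline{C_n}$. That lower bound --- a Slater-style counting/packing estimate on the complement --- is the anticipated main obstacle; everything else (the twin counting, the complement identifications, the passage from $\lambda$ to $\lambda_g$) is routine.
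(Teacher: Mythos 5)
The paper does not actually prove this proposition: it is quoted from the authors' earlier work \cite{oursglobal} and the table is imported without argument, so there is no in-paper proof to compare your route against; your attempt has to be judged on its own. On its own terms, the twin-based half is fine: the counting of twins plus the forced common neighbour correctly gives $\lambda(K_n)=n-1$, $\lambda(K_{1,n-1})=n-1$, $\lambda(K_{r,n-r})=n-2$, $\lambda(K_2(r,s))=n-2$, the complement identifications are right, additivity over components handles $\overline{K_{1,n-1}}$, $\overline{K_{r,n-r}}$, $\overline{W_n}=K_1\cup\overline{C_{n-1}}$, and your passage to $\lambda_g$ via Theorem~\ref{teoremon} and Propositions~\ref{pro.domi}--\ref{pro.vertexdom} (degree-$\le 2$ vertices cannot dominate an LD-code of size $\ge 3$; the hub must belong to any global LD-set of $W_n$) is exactly the right mechanism and the exhibited global codes work.

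The genuine gap is that you stop precisely where the proposition has nontrivial content. First, $\lambda(W_n)=\lceil\frac{2n-2}{5}\rceil$ is not a classical formula you can simply cite alongside $\lambda(P_n)$ and $\lambda(C_n)$; it needs an argument (the hub is dominated for free, so one must locate the rim, which is a cycle-type computation). Second, and more importantly, the entries $\lambda(\overline{P_n})=\lambda(\overline{C_n})=\lceil\frac{2n-2}{5}\rceil$ (and hence $\lambda(\overline{W_n})=1+\lambda(\overline{C_{n-1}})=\lceil\frac{2n+1}{5}\rceil$) are exactly the hard part, and your proposal explicitly defers them: Theorem~\ref{cor.difuno} only confines each value to $\{\lambda(G)-1,\lambda(G)\}$, and which of the two occurs depends on $n \bmod 5$, so one needs both an explicit LD-set of $\overline{C_n}$ of size $\lambda(C_n)-1$ in the dropping residues and a matching counting lower bound in the others. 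Without that, the table is not established for three of the seven families. A concrete way to close it, consistent with your ``Slater-style'' plan: since every vertex of $P_n$ or $C_n$ has degree at most $2$, no vertex can dominate a set of size $\ge 3$, so by Proposition~\ref{pro.vertexdom} a set $S$ with $|S|\ge 3$ is an LD-set of $\overline{C_n}$ if and only if the traces $N_{C_n}(v)\cap S$, $v\notin S$, are pairwise distinct (the empty trace being permitted for at most one vertex); this reduces the complement computation to a locating-only variant on the path or cycle, which can then be settled by the usual packing count of available traces per chosen vertex together with an explicit periodic construction. As it stands, that missing piece is the substance of the cited reference, so the proposal is an outline rather than a proof.
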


{\tiny
\begin{table}[hbt]
\begin{center}
  \begin{tabular}{c|ccccccc}
          { $G$} &  { $P_n$} &  { $C_n$} & { $W_n$}  & { $K_n$} & { $K_{1,n-1}$} &  { $K_{r,n-r}$}  & {$K_2${$(r,s)$}} \\
                  \hline
				{ $n$}	&   { $n\ge 7$} & { $n\ge 7$} & { $n\ge 8$} & { $n\ge 2$} & { $n\ge 4$} &  { $2\le r\le n-r$} & { $3\le r\le s$} \\
                  \hline			
         { $\lambda (G)$} & { $\lceil \frac {2n}5 \rceil$}   & { $\lceil \frac {2n}5 \rceil$} & { $\lceil \frac {2n-2}5 \rceil$}  & { $n-1$} &  { $n-1$} & { $n-2$} & { $n-2$} \\
         %
      { $\lambda (\overline{G})$} & { $\lceil \frac {2n-2}5 \rceil$} & { $\lceil \frac {2n-2}5 \rceil$} & { $ \lceil \frac {2n+1}5 \rceil$}  & { $n$} &  { $n-1$} & { $n-2$} & { $n-3$} \\
					
         %
         { $\lambda_g (G)$ }
         & { $\lceil \frac {2n}5 \rceil$}  & { $\lceil \frac {2n}5 \rceil$}  &  { $\lceil \frac {2n+1}5 \rceil$} & { $n$}    &  { $n-1$} & { $n-2$}  & { $n-2$} \\
         \hline
\end{tabular}
\end{center}
\caption{The values of  $\lambda (G)$, $\lambda (\overline{G})$ and $\lambda_g (G)$ for some families of graphs.}
\label{tab.valors}
\end{table}}

\section{The LD-set-associated graph}

Let $S $ be an LD-set of a graph $G$.
We introduce in this section a labeled graph associated to $S$ and study some general properties.
This graph will allow us to derive  some properties related to LD-sets and the location-domination number of $G$.

\begin{defi}\label{def.Gomega} \rm
Let $S$ be  an LD-set with exactly $k$ vertices of a connected graph $G=(V,E)$ of order $n$.
Consider $z\notin V(G)$ and define $N_G(z)=\emptyset$.
The so-called \emph{$S$-associated graph}, denoted by $G^{S}$, is the edge-labeled graph defined as follows.
\begin{itemize}
\item[(1)] $V(G^{S})=(V\setminus S)\cup \{z\}$;
\item[(2)]  For every pair of vertices $x,y\in V(G^{S})$, $xy\in E(G^{ S})$ if and only if $|(N_G(x)\cap S)\bigtriangleup (N_G(y)\cap S)|=1$;
\item[(3)]  The label $\ell (xy)$ of edge $xy\in E(G^{ S})$ is the only element of $(N_G(x)\cap S)\bigtriangleup (N_G(y)\cap S)\in S$.
\end{itemize}
\end{defi}

\begin{figure}[!hbt]
\begin{center}
\includegraphics[height=5cm]{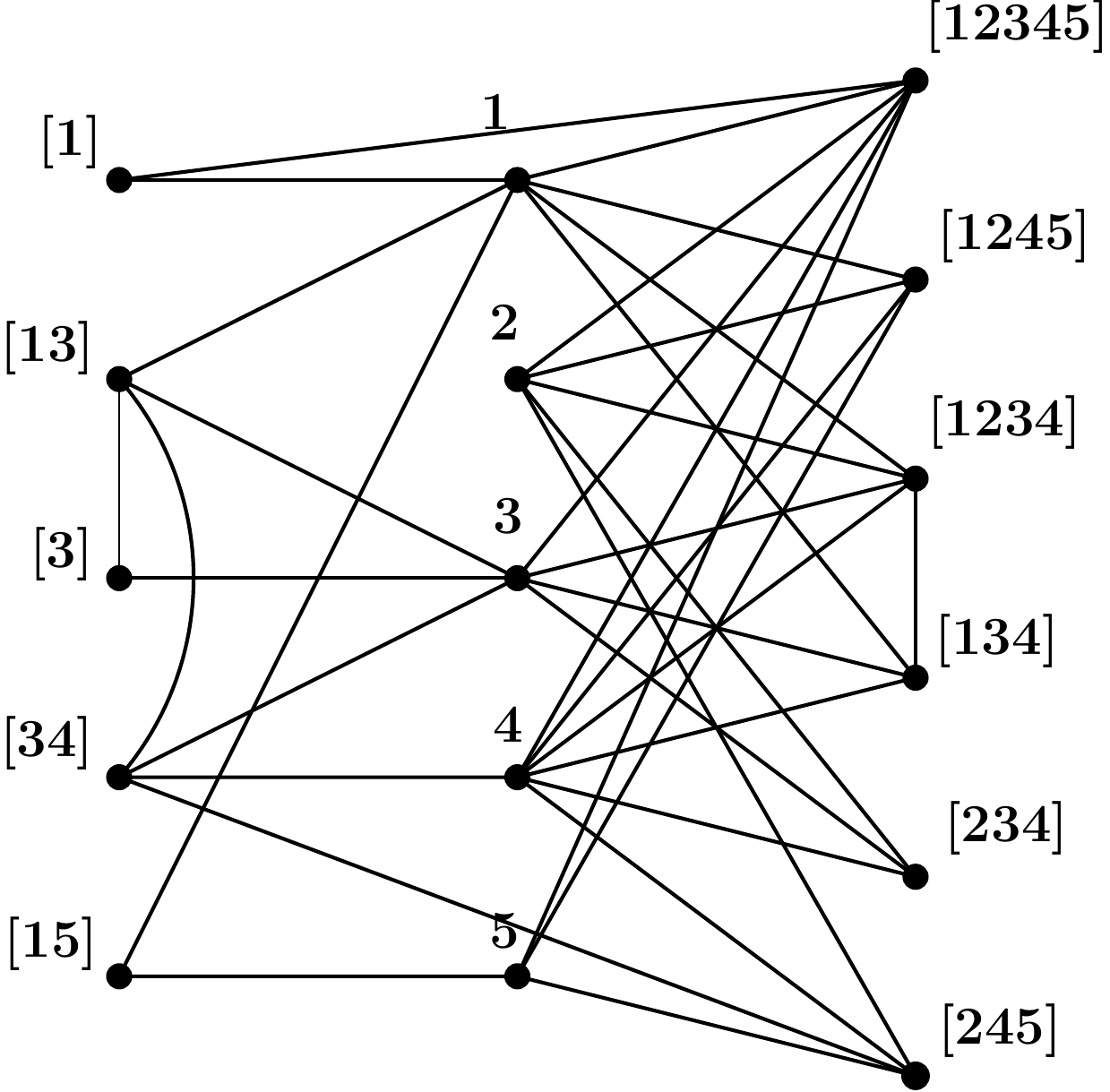}\hspace{1cm}\includegraphics[height=5cm]{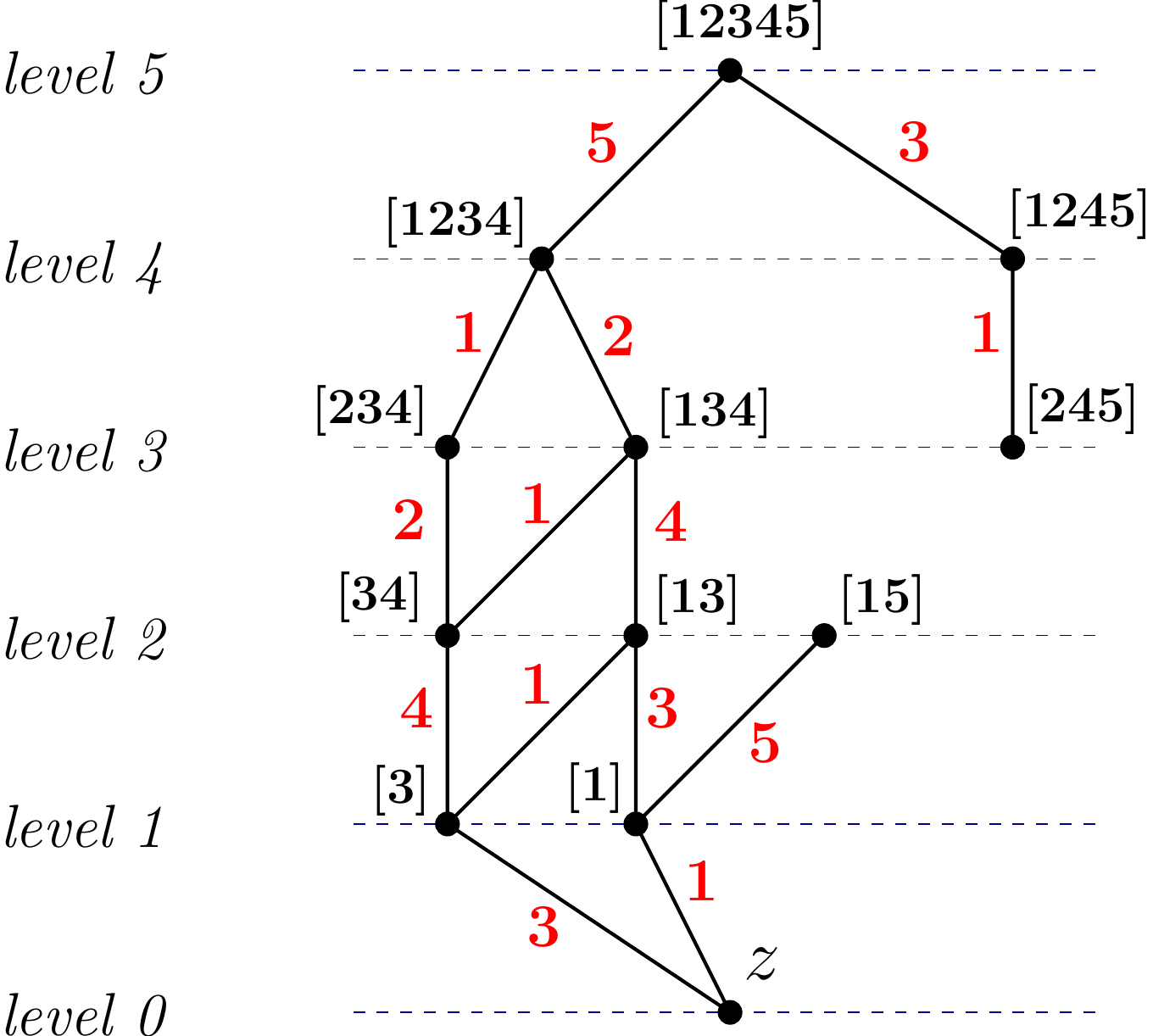}
\end{center}
\caption{Left: a graph $G$. Right: the LD-set-associated graph $G^{S}$, where $S =\{ 1,2,3,4,5 \}$ .}
\label{fig.exempleGestrella}
\end{figure}

Notice that two vertices of $V\setminus S$ are adjacent in $G^{S}$ if their neighborhood in $S$ differ in exactly one vertex, the label of the edge, and $z$ is adjacent to vertices of $V\setminus S$ with exactly a neighbor in $S$.
Therefore, we can represent the graph $G^{S}$ with the vertices lying on $|S |+1$ levels, from bottom (level $0$) to top (level $|S |$), in such a way that vertices with exactly $k$ neighbors in $S$ are at level $k$. There is at most one vertex at level $|S |$ and, if it is so, this vertex is adjacent to all vertices of $S$. The vertices at level 1 are those with exactly one neighbor in $S$ and $z$ is the unique vertex at level $0$. An edge of $G^{S}$ has its endpoints at consecutive levels. Moreover, if $e=xy\in E(G^{S})$, with $\ell (e)=u\in S$, and $x$ is at exactly one level higher than $y$, then $N(x)\cap S=(N(y)\cap S)\cup \{ u \}$, i.e., $x$ and $y$ have the same neighborhood in $S \setminus \{ u \}$.
Therefore, the existence of an edge in $G^{S }$ with label $u\in S$ means that $S\setminus \{ u \}$ is not an LD-set. Hence, if $S$ is an LD-code, then for every $u\in S$ there exists at least an edge in $G^{S }$ with label $u$.
See Figure \ref{fig.exempleGestrella} for an example of an LD-set-associated graph.

The following proposition states some properties of LD-set-associated graphs.

\begin{proposition}\label{claims123}
Let $S$ be  an LD-set with exactly $k$ vertices of a connected graph $G=(V,E)$ of order $n$.
Let $G^{S}$ be its  $S$-associated graph. Then the following holds.
\begin{enumerate}
\item $|V(G^{S})|=n-k+1$.

\item $G^{S}$ is bipartite.

\item Incident edges have different labels.

\item Every cycle of $G^{S}$ contains an even number of edges labeled $v$, for all $v\in S$.

\item Let $\rho$  be a walk with no repeated edges in $G^{S}$.
If  $\rho$ contains an even number of edges labeled $v$ for every $v\in S$, then $\rho$  is a closed walk.

\item If $\rho =x_ix_{i+1}\dots x_{i+h}$ is a path satisfying that vertex $x_{i+h}$ lies at level $i+h$, for any $h\in \{ 0,1,\dots ,h \}$, then
     \begin{enumerate}
       \item
       the edges of $\rho$ have different labels;
       \item for all  $j\in \{ i+1,i+2,\dots ,i+h \}$, $N(x_j)\cap S$ contains the vertex $\ell (x_kx_{k+1})$, for any $k\in \{ i,i+1,\dots , j-1\}$.
     \end{enumerate}

\end{enumerate}
\end{proposition}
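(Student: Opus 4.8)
The plan is to establish the seven items essentially in the order listed, since several of the later ones rely on the earlier ones, and to read everything off the level structure described in the paragraph preceding the proposition. Item~1 is immediate: $V(G^S)=(V\setminus S)\cup\{z\}$ and $|S|=k$, so $|V(G^S)|=(n-k)+1$. For item~2, I would observe that the function $f(x)=|N_G(x)\cap S|$ (with $f(z)=0$) assigns to each vertex its level, and by the definition of adjacency an edge joins vertices whose neighborhoods in $S$ differ in exactly one element, hence whose levels differ by exactly one; thus $f$ is a proper $2$-coloring by parity of the level, and $G^S$ is bipartite. Item~3 (incident edges have different labels): if $xy$ and $xy'$ were edges both labeled $u$, then $N(y)\cap S$ and $N(y')\cap S$ would each differ from $N(x)\cap S$ only in $u$, forcing $N(y)\cap S=N(y')\cap S$; since $S$ is an LD-set and $y,y'\in V\setminus S$, this gives $y=y'$ (the vertex $z$ causes no trouble here because $N_G(z)=\emptyset$ is itself a legitimate ``neighborhood in $S$'' distinct from all others, as $S$ is a dominating set).

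For item~4, I would use the level function again: traversing a cycle $x_0x_1\cdots x_m=x_0$, each step changes the level by $\pm1$, and the change at the step $x_{j}x_{j+1}$ with label $v$ is $+1$ if $v$ enters the neighborhood and $-1$ if it leaves. Fix $v\in S$ and consider only the steps labeled $v$; by item~3 these steps are non-incident, and the membership of $v$ in $N(x_j)\cap S$ toggles exactly at those steps. Since the cycle is closed, $v\in N(x_0)\cap S$ must be restored, so the number of toggles — i.e., the number of edges labeled $v$ — is even. Item~5 is a converse of the same bookkeeping: in a walk $\rho$ with no repeated edges, for each $v\in S$ track whether $v$ is currently in the neighborhood; if the number of $v$-labeled edges is even for every $v$, then the neighborhood-in-$S$ of the terminal vertex equals that of the initial vertex, and since distinct vertices of $V\setminus S$ have distinct neighborhoods in $S$ (and $z$ has the empty one), the walk must end where it started.

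Finally item~6 concerns a path $\rho=x_ix_{i+1}\cdots x_{i+h}$ that is \emph{monotone} in level, i.e., $x_{i+t}$ sits at level $i+t$. Then every step goes up exactly one level, so at the step $x_{j}x_{j+1}$ the label $\ell(x_jx_{j+1})$ is precisely the element newly added, i.e., $N(x_{j+1})\cap S=(N(x_j)\cap S)\cup\{\ell(x_jx_{j+1})\}$ with $\ell(x_jx_{j+1})\notin N(x_j)\cap S$. Iterating this, $N(x_j)\cap S=(N(x_i)\cap S)\cup\{\ell(x_kx_{k+1}):i\le k\le j-1\}$, which is exactly statement~(b), and in particular all these added labels are pairwise distinct and distinct from anything in $N(x_i)\cap S$, giving statement~(a). (There is a minor notational wrinkle in the statement — the index $h$ is reused as a bound for itself, and $k$ is used both as $|S|$ and as a running index — but the intended reading is the monotone-path one just described, and the argument is the telescoping union above.) The main obstacle, such as it is, is item~5, where one must be careful that ``no repeated edges'' together with the parity hypothesis really does force the neighborhoods to coincide: the key point is that toggling a set membership an even number of times returns it to its original state regardless of the order of the toggles, so the final neighborhood equals the initial one, and then injectivity of $x\mapsto N(x)\cap S$ on $V(G^S)$ closes the walk.
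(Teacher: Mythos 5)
Your proposal is correct and follows essentially the same route as the paper: parity of the level $|N(x)\cap S|$ for bipartiteness, the uniqueness of $(N(x)\cap S)\bigtriangleup\{u\}$ for distinct labels on incident edges, the toggle/parity bookkeeping for items 4 and 5 (with injectivity of $x\mapsto N(x)\cap S$ closing the walk), and the telescoping union $N(x_{j})\cap S=(N(x_{j-1})\cap S)\cup\{\ell(x_{j-1}x_{j})\}$ for item 6. The only differences are cosmetic (your explicit remark that $z$'s empty neighborhood is distinct because $S$ dominates, and the slip of saying ``seven items''), so no changes are needed.
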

\begin{proof}
\begin{enumerate}
  \item It is a direct consequence  from the definition of $G^{S}$.
	
  \item Consider the sets $V_1=\{ x\in V(G^{S}) : |N(x)\cap S |\textrm{ is odd}\}$ and  $V_2=\{ x\in V(G^{S}) : |N(x)\cap S|\textrm{ is even}\}$.
	Then $V(G^{S})=V_1\cup V_2$ and $V_1\cap V_2 =\emptyset$.
	Since $||N(x)\cap S|-|N(y)\cap S||=1$ for any $xy\in E(G^{S})$, it is clear that the vertices $x,y$ are not in the same subset $V_i$, $i=1,2$.
	
  \item Suppose that edges $e_1=xy$  and $e_2=yz$ have the same label $l(e_1)=l(e_2)=v$.
	This means that the sets $N(x)\cap S$ and $N(y)\cap S$ differ only in element $v$ and the sets $N(y)\cap S$ and $N(z)\cap S$ differ only in element $v\in S$.
	It is only possible if $N(x)\cap S=N(z)\cap S$, implying that $x=z$.
	
  \item Let $\rho$ be a cycle such that $E(\rho)=\{x_0x_1,x_1x_2,\ldots x_hx_0\}$.
	The set of neighbors in $S$ of two consecutive vertices differ exactly in one vertex.
	If we begin with $N(x_0)\cap S$, each time we add (remove) the vertex of the label of the corresponding edge, we have to remove (add) it later in order to obtain finally the same neighborhood,  $N(x_0)\cap S$.
	Therefore, $\rho$ contains an even number of edges with label $v$.

\item
Consider the vertices $x_0,x_1,x_2,x_3,...,x_{2k}$ of the walk $\rho$.
In this case, $N(x_{2k})\cap S$ is obtained from $N(x_0)\cap S$ by adding or removing  the labels of all the edges of the walk.
Since every label appears an even number of times, for each element $v\in S$ we can match its appearances in pairs, and each pair means that we  add and remove (or remove and add) it from the neighborhood in $S$.
Therefore, $N(x_{2k})\cap S=N(x_0)\cap S$, and hence $x_0=x_{2k}$.

\item It straightly follows  from the fact that $N(x_{j})\cap S = N(x_{j-1}\cap S)\cup \{ \ell (x_{j-1}x_{j})\}$, for any $j\in \{ i+1,\dots ,i+h\}$.
\end{enumerate}
\vspace{-.8cm}\end{proof}

\section{The bipartite case}

In the sequel,  $G=(V,E)$ stands for  a bipartite connected graph of order $n=r+s\ge4$, such that  $V=U\cup W$, being $U$,$W$ their stable sets and  $1\le |U|=r\le s=|W|$.

This section is devoted to solving the equation $\lambda (\overline{G}) =\lambda (G)  + 1$  when we restrict ourselves to bipartite graphs.
According to  Corollary \ref{cori}, this equality is feasible only for graphs without global LD-codes.

\begin{lemma}\label{prop.UW} Let $S$ be an LD-code of
$G$. Then, $\lambda (\overline{G})\le \lambda (G)$ if any of the following conditions holds.
\begin{enumerate}
  \item $S\cap U\not= \emptyset$ and $S\cap W\not= \emptyset$.
  \item $r<s$ and  $S=W$.
	\item $2^r\le s$.
\end{enumerate}
\end{lemma}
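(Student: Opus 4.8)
The plan is to treat the three conditions in turn, using throughout Propositions \ref{pro.domi} and \ref{pro.vertexdom}: an LD-set $S$ of $G$ is global if and only if no vertex of $V\setminus S$ dominates $S$ in $G$, and since $G$ is bipartite the neighbourhood of any vertex lies entirely inside $U$ or entirely inside $W$, so a set meeting both $U$ and $W$ can never be dominated by a single vertex. Hence condition (1) is immediate: if $S\cap U\neq\emptyset$ and $S\cap W\neq\emptyset$, then no vertex of $V\setminus S$ dominates $S$, so $S$ is a global LD-code and $\lambda(\overline{G})\le\lambda(G)$ follows from Corollary \ref{cori}.

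For condition (3) I would reduce to the other two. First, an LD-set contained in $U$ must equal $U$: any $x\in U\setminus S$ has $N_G(x)\subseteq W$, hence $N_G(x)\cap S=\emptyset$ and $x$ is not dominated; symmetrically an LD-set contained in $W$ must equal $W$. If an LD-code $S$ equals $U$, then $w\mapsto N_G(w)$ maps $W$ injectively into the nonempty subsets of $U$ (distinct identifiers, each nonempty by domination), forcing $s\le 2^{r}-1$, which contradicts $2^{r}\le s$; so no LD-code lies inside $U$. Also $2^{r}\le s$ forces $r<s$, since $2^{r}\ge r+1$. Therefore, when $2^{r}\le s$, any LD-code $S$ of $G$ either meets both $U$ and $W$ (case (1)) or equals $W$ with $r<s$ (case (2)).

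For condition (2), $S=W$ is an LD-code, so $\lambda(G)=s$. If $W$ is global we are done, as then $W$ is an LD-set of $\overline{G}$ and $\lambda(\overline{G})\le s$. Otherwise Proposition \ref{pro.vertexdom} yields a unique $u_{0}\in U$ with $N_G(u_{0})=W$; and since $\lambda(G)=s>r=|U|$, the set $U$ is not an LD-set of $G$, while it does dominate $W$, so it must fail to locate $W$ and hence $W$ contains a pair of twin vertices of $G$. I would then pass to the labelled graph $G^{W}$ (vertices $U\cup\{z\}$, with $u_{0}$ at level $s$). A direct computation of $N_{\overline{G}}(x)\cap\bigl((W\setminus\{w_{0}\})\cup\{u_{0}\}\bigr)$ for every $x\in(U\setminus\{u_{0}\})\cup\{w_{0}\}$ shows that $(W\setminus\{w_{0}\})\cup\{u_{0}\}$ is an LD-set of $\overline{G}$ provided no two distinct $x,y\in U\setminus\{u_{0}\}$ satisfy $N_G(x)\bigtriangleup N_G(y)=\{w_{0}\}$, i.e.\ provided $w_{0}$ is not the label of any edge of $G^{W}$ with both ends in $U\setminus\{u_{0}\}$ (and if in addition $w_{0}$ labels no edge incident with $z$, this same set is an LD-set of $G$, hence a global LD-code). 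If no such $w_{0}$ exists, then every element of $W$ labels an edge of $G^{W}[U\setminus\{u_{0}\}]$, so $\binom{r-1}{2}\ge|E(G^{W}[U\setminus\{u_{0}\}])|\ge s$; together with the bipartiteness of $G^{W}$, the matching property of equally labelled edges (Proposition \ref{claims123}(3)) and the even multiplicity of each label along cycles (Proposition \ref{claims123}(4)), this tightly restricts $G^{W}$ near $u_{0}$, and one switches instead to the twin pair of $W$: taking $w_{0}$ in it, the set $U\cup\{w_{0}\}$ (more generally, $U$ together with the ``redundant'' vertices of the twin classes of $W$) has cardinality at most $s$ and is an LD-set of $\overline{G}$ because $W\setminus\{w_{0}\}$ then has pairwise distinct neighbourhoods.

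The hard part is precisely this last step of case (2): guaranteeing that at least one of the two constructions — replacing a vertex of $W$ by $u_{0}$, or enlarging $U$ by suitable vertices of $W$ — always produces an LD-set of $\overline{G}$ of cardinality at most $\lambda(G)=s$. This requires a genuine case analysis organised around the structure of $G^{W}$ at the top level and around the twin classes of $W$; the crude comparison $|E(G^{W})|\le\binom{r+1}{2}$ against $s$ disposes of the extreme ranges of $r$ versus $s$, while the intermediate cases lean on the matching and cycle properties of the $S$-associated graph from Section~3.
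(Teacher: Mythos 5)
Your treatment of conditions (1) and (3) is correct and essentially the paper's own argument: bipartiteness prevents any vertex of $V\setminus S$ from dominating a set meeting both $U$ and $W$, so Proposition \ref{pro.vertexdom} plus Corollary \ref{cori} settle (1), and $2^r\le s$ rules out $S=U$ (the $s$ vertices of $W$ would need distinct nonempty traces on $U$), reducing (3) to (1) or (2). The genuine gap is in condition (2). You correctly obtain the twins ($U$ dominates but $|U|=r<s=\lambda(G)$, so there are $w_1,w_2\in W$ with $N_G(w_1)=N_G(w_2)$), but from there you try to build an LD-set of $\overline{G}$ of size at most $s$ directly, and you concede yourself that the decisive step (``guaranteeing that at least one of the two constructions always produces an LD-set of $\overline{G}$ of cardinality at most $s$'') needs a case analysis you have not carried out; as it stands the proof is unfinished. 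Worse, the fallback construction is quantitatively wrong: let $c$ be the number of distinct $G$-neighborhood classes in $W$ and $R$ the set of ``redundant'' vertices (all but one per class). Then $U\cup R$ is an LD-set of $G$ (the $c$ class representatives have distinct nonempty traces on $U$), so $r+s-c=|U\cup R|\ge\lambda(G)=s$, i.e.\ $c\le r$ and $|U\cup R|\ge s$, with equality only in the boundary case $c=r$. So its cardinality is at least $s$, not at most $s$ as you claim, and strictly larger whenever $W$ has fewer than $r$ classes; likewise $U\cup\{w_0\}$ only suffices when there is a single twin pair. Hence this branch cannot close the intermediate cases, and the appeal to the structure of $G^{W}$ near $u_0$ remains a sketch.

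The idea you are missing is that one never needs to touch $\overline{G}$ (nor $G^{W}$, nor the dominating vertex $u_0$) in case (2): exchange inside $G$ instead. Since $N_G(w_1)=N_G(w_2)$, every neighbor of $w_1$ is a neighbor of $w_2$, so $W\setminus\{w_1\}$ still dominates $U$ in $G-w_1$, and two vertices of $U$ with equal traces on $W\setminus\{w_1\}$ would agree at $w_2$, hence at $w_1$, hence on all of $W$, contradicting that $W$ is an LD-set; thus $W\setminus\{w_1\}$ is an LD-set of $G-w_1$. Now pick any $u\in N_G(w_1)$ (it exists by connectivity): the trace of $w_1$ on $(W\setminus\{w_1\})\cup\{u\}$ is $\{u\}$, which is nonempty and disjoint from the traces of the vertices of $U\setminus\{u\}$ (those lie in $W$), so $(W\setminus\{w_1\})\cup\{u\}$ is an LD-set of $G$ of cardinality $s=\lambda(G)$, i.e.\ an LD-code of $G$ meeting both stable sets, and condition (1) finishes the proof. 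This is exactly the paper's argument for item 2, and it makes the elaborate machinery in your sketch unnecessary.
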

\begin{proof}
	If $S$ satisfies item 1., then there is no vertex dominating $S$ and, by Proposition \ref{pro.vertexdom}, $S$ is a global LD-code of $G$, which, according to Corollary  \ref{cori}, means that $\lambda (\overline{G})\le \lambda (G)$.
	Next, assume that  $r<s$ and  $S=W$. 
	In this case, $U$ is not an LD-set, but is a dominating set since $G$ is connected. Therefore, there exists a pair of  vertices $w_1,w_2\in W$ such that $N(w_1)=N(w_2)$. Hence,  $W-\{ w_1 \}$ is an LD-set of $G-w_1$. Let $u\in U$ be a vertex adjacent to $w_1$ (it exists since $G$ is connected), and notice that
	$(W\setminus \{ w_1 \})\cup \{ u \}$ is an LD-code of $G$ with vertices in both stable sets, which,  by the preceding item, means that $\lambda (\overline{G})\le \lambda (G)$.
	Finally, if  $2^r\le s$ then  $S\neq U$, which means that $S$ satisfies either item 1. or item 2.
\end{proof}

\begin{corollary}\label{corin}
If $\lambda (\overline{G})=\lambda (G)+1$, then $r\le s \le 2^r-1$.
Moreover, if $r<s$ then $U$ is the  unique LD-code of $G$, and if $r=s$ we may assume that $U$ is a non-global LD-code of $G$.
\end{corollary}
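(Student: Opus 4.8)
The plan is to deduce everything from Lemma~\ref{prop.UW} and Corollary~\ref{cori}, together with one elementary remark about bipartite graphs. Assume $\lambda(\overline{G})=\lambda(G)+1$; in particular $\lambda(\overline{G})>\lambda(G)$, so by Corollary~\ref{cori} the graph $G$ has no global LD-code. Since $G$ always admits at least one LD-code, item~3 of Lemma~\ref{prop.UW} (applied to such a code) forces $2^r>s$, that is, $s\le 2^r-1$; combined with the standing hypothesis $r\le s$ this is exactly the first assertion $r\le s\le 2^r-1$.

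The key remark is that \emph{in a bipartite graph a dominating set contained in one stable set must equal that whole stable set}: if $S\subseteq U$ and some $u\in U\setminus S$ existed, then $N_G(u)\subseteq W$ would give $N_G(u)\cap S=\emptyset$, so $S$ would not be dominating; hence $S=U$, and symmetrically any dominating set contained in $W$ equals $W$. On the other hand, item~1 of Lemma~\ref{prop.UW} shows that no LD-code of $G$ can meet both $U$ and $W$ (otherwise $\lambda(\overline{G})\le\lambda(G)$). Combining the two facts, every LD-code of $G$ equals $U$ or $W$; and since $G$ has at least one LD-code, at least one of $U,W$ is an LD-code (so, incidentally, $\lambda(G)=r$).

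To finish I would split into the two cases. If $r<s$: were $W$ an LD-code, item~2 of Lemma~\ref{prop.UW} would yield $\lambda(\overline{G})\le\lambda(G)$, contradicting the hypothesis; hence $U$ is the only LD-code of $G$, which is the claim. If $r=s$: by the previous paragraph at least one of $U,W$ is an LD-code, and interchanging the names of the two stable sets (legitimate precisely because $r=s$) we may assume it is $U$; since $G$ has no global LD-code, this LD-code $U$ is non-global, as required.

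The argument involves essentially no computation. The two points that need a moment of care are the structural remark that a dominating set of a bipartite graph lying inside one part must be that entire part, and the bookkeeping that then reduces the list of possible LD-codes to $\{U,W\}$. All the substantive work is already packaged in Lemma~\ref{prop.UW} and Corollary~\ref{cori}, so the only genuine obstacle is invoking the correct item of that lemma in each case; there is no independent difficulty to overcome.
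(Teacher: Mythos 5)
Your proof is correct and follows essentially the route the paper intends: the paper states this as an immediate consequence of Lemma~\ref{prop.UW} (together with Corollary~\ref{cori} and the standing assumption $r\le s$), and your auxiliary remark that a dominating set contained in one stable set of a bipartite graph must be the whole part is exactly the observation implicitly used there (e.g.\ in the proof of item~3 of Lemma~\ref{prop.UW}) to reduce the possible LD-codes to $U$ and $W$. Nothing further is needed.
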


\begin{proposition}\label{pro.r1o2}
If $G$ has order at least 3 and $1\le r \le 2$, then $\lambda (\overline{G}) \le \lambda (G)$.
\end{proposition}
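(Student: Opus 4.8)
The plan is to use Corollary \ref{corin} as the main lever: if $\lambda(\overline{G}) = \lambda(G)+1$ then we would need $r \le s \le 2^r - 1$, so it suffices to show that the small values $r \in \{1,2\}$ either force $s$ out of this range or otherwise produce a global LD-code, contradicting the hypothesis. So I would argue by contradiction, assuming $\lambda(\overline{G}) = \lambda(G)+1$, and then split into the cases $r=1$ and $r=2$.

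For $r=1$, Corollary \ref{corin} gives $s \le 2^1 - 1 = 1$, so $n = r+s \le 2$, contradicting the assumption that $G$ has order at least $3$. (One should double-check the boundary: $r=1, s=1$ gives $G = K_2$ of order $2$, which is excluded; and $G$ connected bipartite with $r=1$ is a star $K_{1,s}$, whose LD-code behaviour is in fact recorded in Table \ref{tab.valors} with $\lambda(\overline{G}) = \lambda(G)$, consistent with the claim.) For $r=2$, Corollary \ref{corin} forces $s \le 2^2 - 1 = 3$, so $s \in \{2,3\}$ and $n \in \{4,5\}$. The remaining work is to rule out these finitely many configurations: for $s=3$ (so $r<s$) Corollary \ref{corin} says $U$, a set of size $2$, must be the unique LD-code, and for $s=2$ (so $r=s$) we may assume $U$ is a non-global LD-code; in each case one checks directly that $G$ cannot have $U$ as its only LD-code, or that some LD-code with vertices in both parts exists, so Lemma \ref{prop.UW}(1) applies.

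Concretely, for $r=2$ with $U = \{u_1,u_2\}$: since $G$ is connected bipartite and $S=U$ is an LD-code, every vertex of $W$ has a nonempty neighborhood inside $\{u_1,u_2\}$, so each $w \in W$ satisfies $N(w) \cap U \in \{\{u_1\},\{u_2\},\{u_1,u_2\}\}$; as $S$ is locating there is at most one $w$ of each type, hence $s \le 3$ (recovering the Corollary's bound internally). If any $w$ has $N(w)\cap U = \{u_1,u_2\}$, that vertex dominates $U$, and one then shows $U$ cannot be the only LD-code — for instance $U \setminus \{u_1\} \cup \{w\}$ or a similar swap yields another LD-code with vertices in both stable sets, triggering Lemma \ref{prop.UW}(1). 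If no such $w$ exists then $U$ is already global by Proposition \ref{pro.vertexdom}(b), again contradicting $\lambda(\overline{G}) = \lambda(G)+1$ via Corollary \ref{cori}. The case $r=s=2$ is smaller still and handled the same way by inspecting the at most a handful of connected bipartite graphs on parts of size $2$.

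The main obstacle I anticipate is the bookkeeping in the $r=2$ case: one must be careful that the swap producing an alternative LD-code really is locating-dominating in $G$ (not merely dominating), and that it genuinely meets both parts $U$ and $W$. This is where the structure of $G^S$ from Section 3 — in particular Proposition \ref{claims123} and the observation that an LD-code must have an edge of every label in $G^S$ — is likely to be the cleanest tool: the few possible $S$-associated graphs on $n-k+1 \le 4$ vertices can be enumerated, and for each one either $S$ is forced to be global or an edge-label argument exhibits a second LD-code. Everything else is a short finite check, so I do not expect any serious difficulty beyond this case analysis.
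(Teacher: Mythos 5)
Your top-level reduction is fine and essentially the paper's: Lemma~\ref{prop.UW}(3) (equivalently Corollary~\ref{corin}) disposes of $r=1$ and of $r=2$, $s\ge 4$, leaving the finitely many connected bipartite graphs with $r=2$ and $s\in\{2,3\}$. The gap is in how you propose to finish that finite check. Your mechanism in the case that some $w\in W$ dominates $U$ is to produce a second LD-code meeting both stable sets by a swap such as $(U\setminus\{u_1\})\cup\{w\}$, and then invoke Lemma~\ref{prop.UW}(1); your fallback formulation via $G^S$ is that ``either $S$ is forced to be global or an edge-label argument exhibits a second LD-code.'' Both claims fail for $P_5$, which is exactly one of the graphs you must handle. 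Writing $P_5=w_1u_1w_3u_2w_2$ with $U=\{u_1,u_2\}$, $W=\{w_1,w_2,w_3\}$, the set $U$ is the \emph{unique} LD-code of $P_5$, it is non-global ($w_3$ dominates it), and no LD-code meets both parts: every swap you suggest breaks either domination or location (e.g.\ $\{u_2,w_3\}$ leaves $w_1$ undominated, $\{u_2,w_1\}$ gives $w_2$ and $w_3$ the same trace). Nor does the associated graph rescue you: $P_5^{U}$ is a $4$-cycle in which each label appears exactly twice, yet $U$ is non-global and unique. So for this configuration neither Lemma~\ref{prop.UW}(1) nor Corollary~\ref{cori} is available, and no contradiction can be derived from the structural consequences of Corollary~\ref{corin} alone, because those hypotheses are all actually satisfied by $P_5$.

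The underlying point is that the conditions in Lemma~\ref{prop.UW} are sufficient but not necessary for $\lambda(\overline{G})\le\lambda(G)$: for $P_5$ the inequality holds only because $\overline{P_5}$ happens to admit a $2$-element LD-set that is not an LD-set of $P_5$ (for instance $\{w_3,u_2\}$, whose traces in $\overline{P_5}$ on the remaining vertices are $\{w_3,u_2\}$, $\{u_2\}$ and $\{w_3\}$). So the missing step is to exhibit LD-sets of the complements directly for the small graphs, which is precisely what the paper's proof does: it lists the cases $s=2$ ($P_4$, $C_4$) and $s=3$ ($P_5$, $K_{2,3}$, $K_2(1,2)$, the banner) and records $\lambda(\overline{G})\le\lambda(G)$ for each by inspection. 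Your argument as written would complete all cases except $P_5$, but at $P_5$ the proposed contradiction provably does not exist, so the proof is not salvageable by ``finding another LD-code of $G$''; you must instead compute $\lambda(\overline{P_5})=2$ (or otherwise argue directly in the complement).
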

\begin{proof}
If $r=1$, then $G$ is the star $K_{1,n-1}$ and $\lambda (\overline{G})=\lambda (G)=n-1$.

\begin{figure}[!hbt]
\begin{center}
\includegraphics[width=0.6\textwidth]{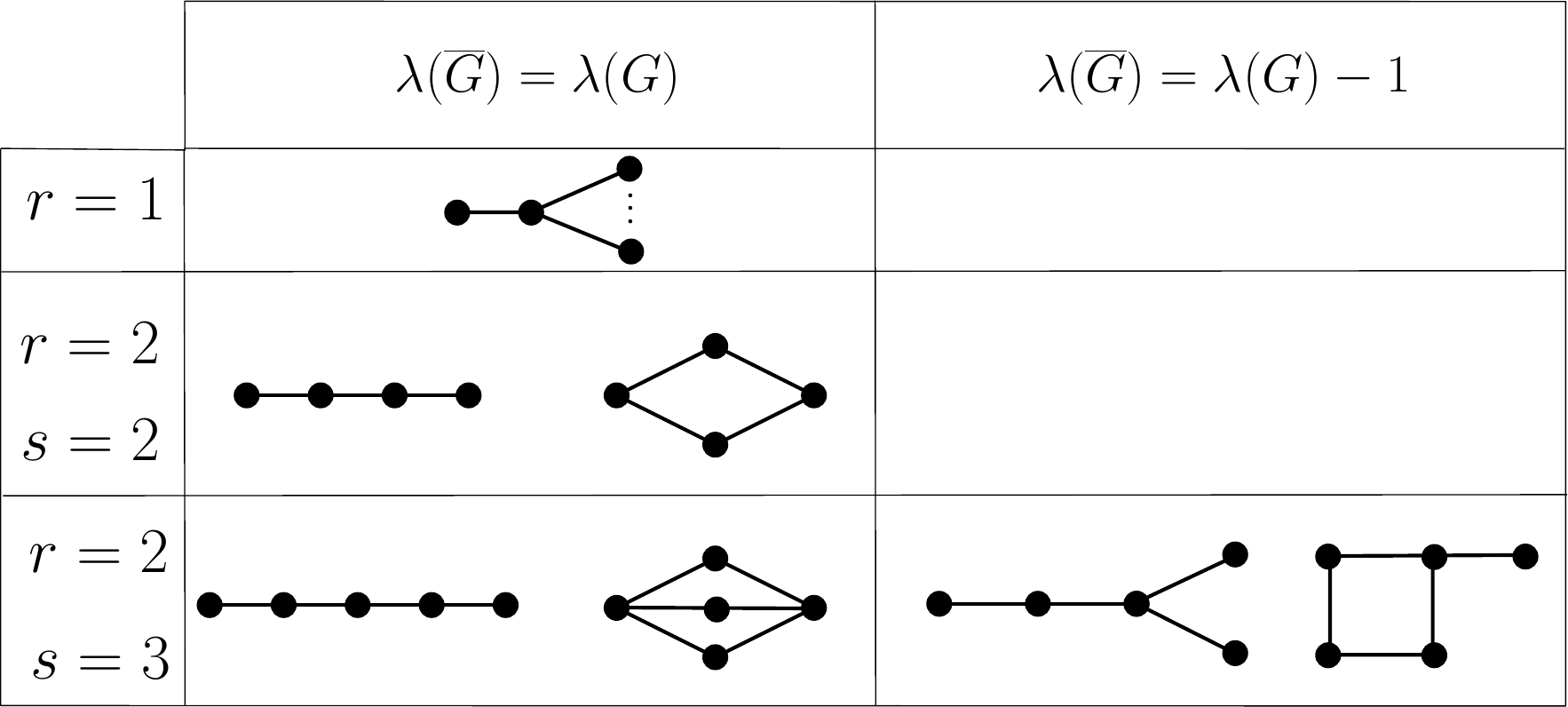}
\caption{Some bipartite graphs with $1\le r \le 2$.}\label{fig.r1o2}
\end{center}
\end{figure}

Suppose  that $r=2$.
If $s\ge 2^2=4$ then,  by Lemma \ref{prop.UW}, $\lambda (\overline{G})\le \lambda (G)$.

If $s=2$,  then $G$ is either $P_4$ and $\lambda (\overline{P_4})= \lambda (P_4)=2$, or $G$ is $C_4$ and $\lambda (\overline{C_4})= \lambda (C_4)=2$.

If $s=3$, then $G$ is $P_5$, $K_{2,3}$,  $K_2(1,2)$,  or a banner $P$, and
$\lambda (\overline{P_5})= \lambda (P_5)=2$,
$\lambda (\overline{K_{2,3}})= \lambda (K_{2,3})=3$,
$2=\lambda (\overline{K_2(1,2)})< \lambda (K_2(1,3))=3$, and
$2=\lambda (\overline{P})<\lambda (P)=3$.
\end{proof}

Notice that the only bipartite graphs $G$ such that $\lambda (G)=2$ are  $P_3$, $P_4$,  $C_4$ and $P_5$.
Observe also that every bipartite graph $G$ such that $\lambda (\overline{G}) =\lambda (G)  + 1$ satisfies $\lambda (G)\ge r$, being $r$ the order of its smallest stable set.

Next, we approach the case $\lambda (G)\ge3$.  That is to say, from now on we assume that $r\ge 3$.

\begin{lemma}\label{lem.exist}
If $\lambda (\overline{G})= \lambda (G)+1$ and $U$ is an LD-code of $G$, then $G^{U}$ contains, for every vertex $u\in U$, at least two edges with label $u$.
\end{lemma}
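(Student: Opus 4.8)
The plan is to argue by contradiction. Assume $\lambda(\overline{G})=\lambda(G)+1$ and that $U$ is an LD-code of $G$, but that some vertex $u_0\in U$ labels at most one edge of $G^{U}$. Since $\lambda(\overline{G})\neq\lambda(G)$, Theorem~\ref{teoremon}(a) gives $\lambda_g(G)=\lambda(\overline{G})=\lambda(G)+1$, so by Theorem~\ref{teoremon}(c) every LD-code of $G$ is non-global; hence it suffices to produce a global LD-code of $G$. Because $U$ is an LD-code, $G^{U}$ has at least one edge labeled $u_0$ (as observed in Section~3), so under our assumption there is \emph{exactly} one such edge, say $e=pq$ with $p$ lying one level above $q$ (possibly $q=z$). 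With the convention $N_G(z)=\emptyset$, this reads $N_G(p)\cap U=(N_G(q)\cap U)\cup\{u_0\}$ with $u_0\notin N_G(q)$, and since $z$ is the only vertex at level $0$ we must have $p\in W$; in particular $u_0\in N_G(p)$.

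Next I would consider the swapped set $U^{*}=(U\setminus\{u_0\})\cup\{p\}$; as $p\in W$ is disjoint from $U$, we have $|U^{*}|=|U|=\lambda(G)$. The core of the argument is to check that $U^{*}$ is an LD-set of $G$, whose set of non-code vertices is exactly $(W\setminus\{p\})\cup\{u_0\}$. Bipartiteness gives $N_G(w)\cap U^{*}=N_G(w)\setminus\{u_0\}$ for $w\in W\setminus\{p\}$, and $N_G(u_0)\cap U^{*}=\{p\}$; so $u_0$ is dominated and, since $\{p\}$ is not contained in $U\setminus\{u_0\}$, it is distinguished from every $w\in W\setminus\{p\}$. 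If two distinct vertices $w,w'\in W\setminus\{p\}$ satisfied $N_G(w)\setminus\{u_0\}=N_G(w')\setminus\{u_0\}$, then, since $U$ is an LD-set, $N_G(w)\bigtriangleup N_G(w')=\{u_0\}$, so $ww'$ would be an edge of $G^{U}$ labeled $u_0$ and distinct from $e$ (as $w,w'\neq p$), contradicting uniqueness; the same reasoning applied to $wz$ rules out $N_G(w)=\{u_0\}$ and hence shows every $w\in W\setminus\{p\}$ is dominated. Therefore $U^{*}$ is an LD-set, and by cardinality an LD-code, of $G$.

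Finally I would verify that $U^{*}$ is global. No vertex of $V\setminus U^{*}=(W\setminus\{p\})\cup\{u_0\}$ dominates $U^{*}$ in $G$: a vertex of $W$ cannot, being non-adjacent to $p\in U^{*}\cap W$, and $u_0$ cannot, since $U^{*}$ contains the nonempty set $U\setminus\{u_0\}$ (here $|U|\ge 2$) whose vertices are non-adjacent to $u_0$. By Proposition~\ref{pro.vertexdom}(b), $U^{*}$ is then an LD-set of $\overline{G}$ as well, i.e.\ a global LD-code of $G$, contradicting the conclusion of the first paragraph. Thus every $u\in U$ labels at least two edges of $G^{U}$.

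The only delicate point is the verification that $U^{*}$ is an LD-set, and it rests entirely on exploiting the uniqueness of the edge labeled $u_0$ to exclude both a second ``horizontal'' coincidence of neighborhoods among vertices of $W$ and a second ``vertical'' edge down to $z$; treating $z$ through the convention $N_G(z)=\emptyset$ lets the cases $q\in W$ and $q=z$ be handled uniformly and is what keeps the bookkeeping under control.
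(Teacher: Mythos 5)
Your argument is, at its core, the same as the paper's: assuming some $u_0\in U$ labels too few edges of $G^{U}$, you swap $u_0$ for a vertex $p\in W$ adjacent to it, check that $U^{*}=(U\setminus\{u_0\})\cup\{p\}$ is an LD-code of $G$ meeting both stable sets, observe that such a code is automatically global in a bipartite graph (Proposition~\ref{pro.vertexdom}), and contradict $\lambda(\overline{G})=\lambda(G)+1$ via Theorem~\ref{teoremon}/Corollary~\ref{cori}. The paper performs exactly this swap, only phrased through a case analysis on whether $N(U\setminus\{u_0\})=W$ instead of through the associated graph; your verification of $U^{*}$ (distinctness, domination, globality) is correct.

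The one step I cannot accept as written is the disposal of the ``zero edges labeled $u_0$'' case by appeal to the Section~3 remark that an LD-code must carry at least one edge of every label. That remark does not follow from minimality, and it is false in general: when the code is a stable set, as $U$ is, deleting $u_0$ always leaves $u_0$ itself undominated, so the fact that $U\setminus\{u_0\}$ fails to be an LD-set of $G$ gives no information about conflicts among $W$. Concretely, let $U=\{1,2,3\}$ and let $W$ consist of three vertices with neighborhoods $\{1\}$, $\{2\}$, $\{1,2,3\}$; then $U$ is an LD-code (no $2$-set can locate the four outside vertices), yet $G^{U}$ has no edge labeled $3$. Of course this graph does not satisfy $\lambda(\overline{G})=\lambda(G)+1$ --- and that is the point: the existence of a first $u_0$-labeled edge must be extracted from the hypothesis, not from the minimality of $U$. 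The repair costs nothing and stays inside your own method: if no edge of $G^{U}$ is labeled $u_0$, take $p$ to be any neighbor of $u_0$ in $W$ (it exists by connectivity); your verification of $U^{*}$ then goes through verbatim, indeed more easily, since there is no pair of vertices of $W$ whose traces on $U$ differ exactly in $u_0$ and no vertex with neighborhood $\{u_0\}$, and the same contradiction results. This is precisely how the paper's proof is organized: it first deduces from the absence of two-sided LD-codes that $U\setminus\{u_0\}$ cannot locate $W$ inside $G-u_0$, which yields the first conflict (equivalently, the first labeled edge), and only then argues that a second one must exist.
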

\begin{proof}
Condition  $\lambda (\overline{G})= \lambda (G)+1$ implies  that there is no LD-code of $G$ with vertices in both stable sets.
Therefore, for any $u\in U$, $U\setminus \{ u \}$ is not an LD-set of the graph $G-u$, otherwise the set $U\setminus \{ u \}$ together with a neighbor of vertex $u$ would be an LD-code of $G$ with vertices in both stable sets. 
We distinguish two possible cases.

Case (a). If $N(U\setminus \{ u \})=W$ there is at least a pair of vertices $w_{1},w_{2} \in W$ such that $N(w_{1})\bigtriangleup N(w_{2})=\{ u \}$ (see Figure \ref{fig.condicioBip},(a)).
Moreover, since there is no LD-code with vertices in both stable sets, there must be another pair of vertices $w_{3},w_{4} \in W$ such that $N(w_{3})\bigtriangleup N(w_{4})=\{ u \}$, 
otherwise $(U\setminus \{ u \}  )\cup \{ w \} $, where $w$ is the neighbor of $u$ in $\{ w_{1},w_{2}\}$, would be an LD-code with vertices in both stable sets. 

Case (b). If $N(U\setminus \{ u \})\subsetneq W$, then there is exactly a vertex $w\in W$ such that $N(w)=\{ u\}$ (see Figure \ref{fig.condicioBip},(b)).
By the other hand, if the neighborhood in $U\setminus \{ u \}$ of any two vertices of $W\setminus \{ w \}$ is different, then $(U\setminus \{ u \}) \cup \{ w \}$ would be an LD-code with vertices in both stable sets. 
Therefore, there is at least a pair of vertices $w_{1},w_{2} \in W\setminus \{w\}$ such that $N(w_{1})\bigtriangleup N(w_{2})=\{ u \}$.  
Notice that in this case $N(w)\bigtriangleup \emptyset = \{ u \}$.

\begin{figure}[!hbt]
\begin{center}
\includegraphics[width=0.6\textwidth]{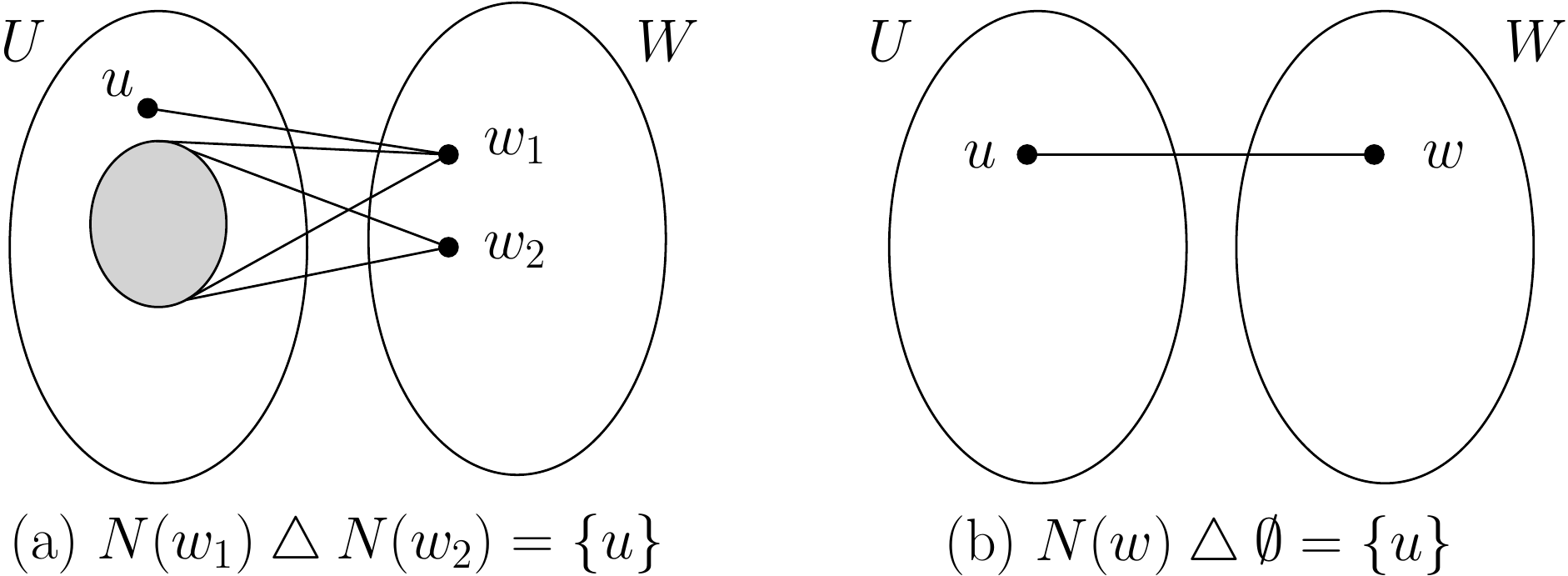}
\caption{Case (a): $N(U\setminus \{ u \})=W$. Case (b): $N(U\setminus \{ u \})\subsetneq W$.}\label{fig.condicioBip}
\end{center}
\end{figure}


Consequently, in both cases, for every $u\in U$, there are at least two edges with label $u$ in the graph $G^U$.
\end{proof}

In the study of LD-sets using the LD-associated graph, a family of graphs is particularly useful, the \emph{cactus graph} family.
A \emph{block} of a graph is a maximal connected subgraph with no cut vertices.
A connected graph $G$ is a \emph{cactus} if all its blocks are cycles or edges.
Cactus are characterized as those connected graphs with no edge shared by two cycles.

\begin{lemma}\label{claim.subgraphcactus}
Let $\lambda (\overline{G})= \lambda (G)+1$
and assume that $U$ is an LD-code of $G$. Consider a subgraph $H$ of $G^U$ induced by a set of edges containing exactly two edges with label $u$, for each $u\in U$.
Then, all  connected components of $H$ are cactus.
\end{lemma}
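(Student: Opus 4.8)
\emph{Proof proposal.} The plan is to exploit the edge‑labelling of $G^{U}$ by a short parity argument. Two observations underlie everything. First, since $U$ is an LD‑set of $G$, distinct vertices of $W=V\setminus U$ have distinct neighbourhoods in $U$; moreover, as $G$ is connected with at least two vertices, no vertex of $W$ has empty neighbourhood, whereas $N_{G}(z)=\emptyset$. Hence \emph{all} vertices of $G^{U}$ have pairwise distinct neighbourhoods in $U$. Second, for a path $P=x_{0}x_{1}\cdots x_{m}$ in $G^{U}$ with endpoints $a=x_{0}$ and $b=x_{m}$, toggling the label of each edge of $P$ in turn yields
\[
(N_{G}(a)\cap U)\,\bigtriangleup\,(N_{G}(b)\cap U)=\{\, v\in U : P\text{ has an odd number of edges labelled }v \,\};
\]
together with the first observation this shows that whenever $a\neq b$ the path $P$ uses some label an odd number of times.

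Next I would invoke the standard fact that a connected graph is a cactus if and only if it contains no \emph{theta subgraph}, that is, no subgraph formed by three internally disjoint paths joining a common pair of vertices. So it suffices to show that $H$ contains no three internally disjoint paths $P,Q,R$ between a common pair $a\neq b$. Suppose it did, and for $v\in U$ let $p_{v},q_{v},r_{v}$ count the edges labelled $v$ in $P,Q,R$. Each of $P\cup Q$, $Q\cup R$, $R\cup P$ is a cycle of $G^{U}$, so by item~4 of Proposition~\ref{claims123} each of $p_{v}+q_{v}$, $q_{v}+r_{v}$, $r_{v}+p_{v}$ is even; hence $p_{v}\equiv q_{v}\equiv r_{v}\pmod 2$ for every $v\in U$. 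On the other hand $P,Q,R$ are pairwise edge‑disjoint subgraphs of $H$ (they meet only in $a$ and $b$), and $H$ has exactly two edges of each label, so $p_{v}+q_{v}+r_{v}\le 2$ for all $v$. If some $p_{v}$ were odd, then all of $p_{v},q_{v},r_{v}$ would be odd and their sum at least $3$, a contradiction; hence $p_{v}$ is even for every $v$. Applying the displayed identity to $P$ gives $N_{G}(a)\cap U=N_{G}(b)\cap U$, contradicting $a\neq b$. Therefore $H$ has no theta subgraph, and every connected component of $H$ is a cactus.

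The delicate point is organisational rather than computational: one must pass from the definition of cactus (every block an edge or a cycle) to the forbidden theta subgraph, and check that the three paths of such a subgraph are genuinely edge‑disjoint, so that the label multiplicities add up inside $H$. Equivalently, one may argue directly from the assumption that two distinct cycles $C_{1},C_{2}$ share an edge $e$ (with label $u$, say): item~4 of Proposition~\ref{claims123} then forces both edges of $H$ labelled $u$ to lie on each of $C_{1}$ and $C_{2}$, and the union $C_{1}\cup C_{2}$ contains a theta subgraph, after which the same parity contradiction applies. Once this reduction is in place, the counting in the previous paragraph is routine.
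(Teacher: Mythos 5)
Your argument is correct, but it takes a genuinely different route from the paper's. The paper proves the cactus property by supposing an edge $e_1$ with label $u$ lies on two distinct cycles $C_1,C_2$ of $H$, noting via item~4 of Proposition~\ref{claims123} that both cycles must then contain the second $u$-labelled edge $e_2$, and then carrying out a careful case analysis of how the two cycles can meet (the paths $P_1,P_1',P_2$, choice of divergence vertices $z_1,z_2$, etc.), each case ending in a sub-cycle with an odd number of appearances of some label or in a ``closed path'', contradicting items~4 and~5. You instead reduce to the forbidden-theta characterization of cacti and run a single global parity count: on three internally disjoint $a$--$b$ paths the multiplicities of each label have equal parity (pairwise unions are cycles), edge-disjointness plus the fact that $H$ carries exactly two edges per label caps the total at~$2$, so every label occurs an even number of times on each path, forcing $N_G(a)\cap U=N_G(b)\cap U$ and contradicting that distinct vertices of $G^U$ have distinct traces on $U$ (here the LD and domination properties of $U$, and $N_G(z)=\emptyset$, are exactly what is needed). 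This is shorter and more transparent than the paper's case analysis; what it buys is bought at the price of invoking the standard fact that a connected graph whose every edge lies on at most one cycle is equivalent to having no theta subgraph (equivalently, that two distinct cycles sharing an edge yield a theta) --- precisely the structural work the paper does by hand --- so if you want the proof self-contained you should add the two-line ear/segment argument for that equivalence, or, alternatively, note that your final step could simply cite item~5 of Proposition~\ref{claims123}: a path all of whose labels occur an even number of times would have to be closed, contradicting $a\neq b$.
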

\begin{proof}
We will prove that there is no edge lying on two different cycles of $H$.
Suppose on the contrary that there is an edge $e_1$ contained in two different cycles $C_1$ and $C_2$ of $H$.
If the label of $e_1$ is $u\in U$, by Proposition \ref{claims123} both cycles $C_1$ and $C_2$ contain the other edge $e_2$ of $H$ labeled with $u$.
Suppose that $e_1=x_1y_1$ and $e_2=x_2y_2$ and assume w.l.o.g. that there exist $x_1-x_2$ and $y_1-y_2$ paths in $C_1$ not containing edges $e_1,e_2$. Let $P_1$ and $P_1'$ denote respectively those paths (see Figure \ref{fig.ac} a).

We have two possibilities for $C_2$:
(i) there are  $x_1-x_2$ and $y_1-y_2$ paths in $C_2$ not containing neither $e_1$ nor $e_2$. Let $P_2$ denote the $x_1-x_2$ path in $C_2$ in that case (see Figure \ref{fig.ac} b);
(ii) there are  $x_1-y_2$ and $y_1-x_2$ paths in $C_2$ not containing neither $e_1$ nor $e_2$ (see Figure \ref{fig.ac} c).

In  case (ii), the closed walk formed with the  path $P_1$, $e_1$ and the $y_1-x_2$ path in $C_2$ would contain a cycle with exactly an edge labeled with $u$, which is a contradiction (see Figure \ref{fig.ac} d).

In case (i), at least one the following cases holds: the $x_1-x_2$ paths in $C_1$ and in $C_2$, $P_1$ and $P_2$, are different or the $y_1-y_2$ paths in $C_1$ and in $C_2$ are different (otherwise, $C_1=C_2$).

\begin{figure}[!hbt]
\begin{center}
\includegraphics[height=7cm]{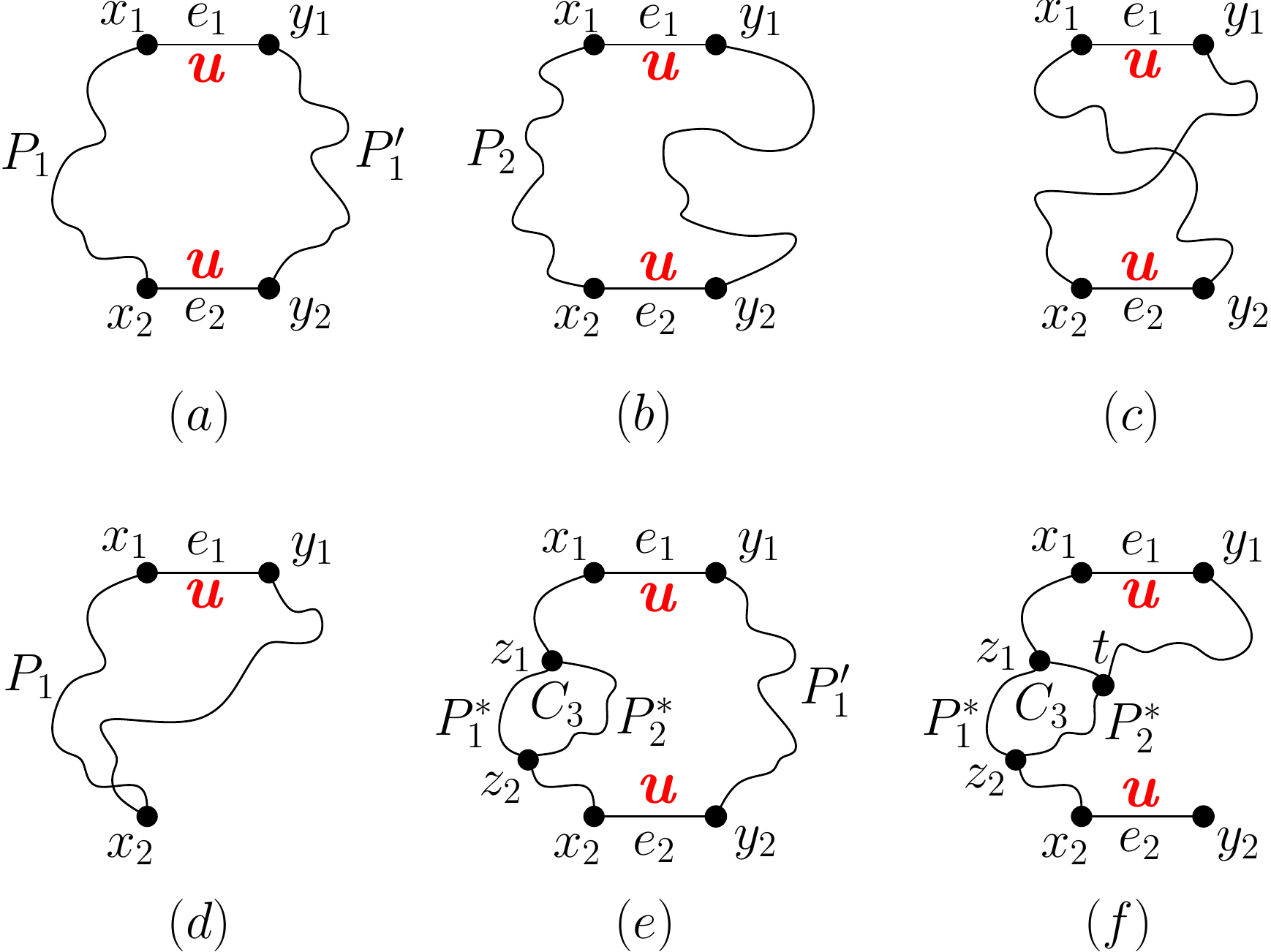}
\caption{All connected components of the subgraph $H$ are cactus.}\label{fig.ac}
\end{center}
\end{figure}

Assume that $P_1$ and  $P_2$ are different.
Let $z_1$ be the last vertex shared by $P_1$ and $P_2$ advancing from $x_1$ and let $z_2$ be the first vertex shared by $P_1$ and $P_2$ advancing from $z_1$ in $P_2$.
Notice that $z_1\not= z_2$.
Consider the cycle $C_3$ formed with the $z_1-z_2$ paths in $P_1$ and $P_2$.
Let $P_1^*$ and $P_2^*$ be respectively the $z_1-z_2$  subpaths of $P_1$ and $P_2$ (see Figure \ref{fig.ac} e).
We claim that the internal vertices of $P_2^*$ do not lie in $P_1'$.
Otherwise, consider the first vertex $t$ of $P_1'$ lying also in $P_2^*$.
The cycle beginning in $x_1$, formed by the edge $e_1$, the $y_1-t$  path contained in $P_1'$, the $t-z_1$ path contained in $P_2^*$ and the $z_1-x_1$ path contained in $P_1$ has exactly one appearance of an edge with label $u$, which is a contradiction
(see Figure \ref{fig.ac} f).
By Proposition \ref{claims123}, the labels of edges belonging to  $P_1^*$  appear exactly two times in cycle $C_3$, but they also appear exactly two times in cycle $C_1$.
But this is only possible if they appear exactly two times in $P_1^*$, since $H$ contains exactly to edges with the same label.
By Proposition \ref{claims123}, $P_1^*$ must be a  closed path, which is a contradiction.
\end{proof}

We present next some properties relating parameters of bipartite graphs having cactus as connected components.

\begin{lemma}\label{lemacactus}
Let $H$ be a bipartite graph of order at least 4 such that all its connected components are cactus.
If $H$  has $\cc(H)$ connected components and $\cy(H)$ cycles, then the following holds.

\begin{enumerate}
\item
$|V(H)|=|E(H)|-\cy(H)+\cc (H)$.

\item
If $\ex (H)=|E(H)|-4\, \cy(H)$,  then $\ex (H)\ge 0$ and $|V(H)|=\frac 34 |E(H)|+\frac 14 \ex (H)+\cc (H)$.

\item
$|V(H)|\ge \frac 34 |E(H)|+1$.

\item
$|V(H)|=\frac 34 |E(H)|+1$ if and only if $H$ is connected and all blocks are cycles of order 4.

\end{enumerate}
\end{lemma}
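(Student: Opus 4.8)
The plan is to prove the four items in order, since each one builds on the previous. For item 1, I would use induction on the number of blocks, or equivalently observe that a cactus is obtained from a forest by adding, for each cycle, one extra edge that closes it; more directly, in any connected graph the first Betti number (cycle rank) is $|E|-|V|+1$, and for a cactus this cycle rank equals the number of cycles $\cy$, because distinct cycles share no edge and hence their edge sets are independent in the cycle space — so $|E(H_i)|-|V(H_i)|+1 = \cy(H_i)$ for each connected component $H_i$. Summing over the $\cc(H)$ components gives $|E(H)|-|V(H)|+\cc(H)=\cy(H)$, which rearranges to item 1.

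For item 2, set $\ex(H)=|E(H)|-4\,\cy(H)$. Each cycle of a bipartite graph has even length $\ge 4$, so the total number of edges lying on cycles is at least $4\,\cy(H)$; since $|E(H)|$ counts all edges (cycle edges plus bridges), $|E(H)|\ge 4\,\cy(H)$, i.e. $\ex(H)\ge 0$. For the identity, substitute $\cy(H)=\tfrac14(|E(H)|-\ex(H))$ into item 1: $|V(H)| = |E(H)| - \tfrac14|E(H)| + \tfrac14\ex(H) + \cc(H) = \tfrac34|E(H)| + \tfrac14\ex(H) + \cc(H)$, as claimed. Item 3 is then immediate from item 2 together with $\ex(H)\ge 0$ and $\cc(H)\ge 1$: $|V(H)|\ge \tfrac34|E(H)| + 0 + 1$.

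For item 4, equality $|V(H)|=\tfrac34|E(H)|+1$ forces, via item 2, both $\ex(H)=0$ and $\cc(H)=1$, i.e. $H$ is connected and every cycle has length exactly $4$ (since the slack $\ex(H)$ measures exactly the excess of cycle-edge-count over $4\,\cy(H)$, and it vanishes only when all cycles have length $4$ and there are no bridges — I need to be slightly careful here: $\ex(H)=0$ says $|E(H)|=4\cy(H)$, which means there are no bridge edges and every cycle has length $4$; combined with connectivity this says every block is a $4$-cycle). Conversely, if $H$ is a connected cactus all of whose blocks are $4$-cycles, then $|E(H)|=4\,\cy(H)$ and $\cc(H)=1$, so item 2 gives $|V(H)|=\tfrac34|E(H)|+1$. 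The order-at-least-$4$ hypothesis rules out the degenerate single-edge case and ensures $H$ actually contains a block. I would write out the ``no bridges'' direction explicitly: if $H$ is connected with a bridge $e$, then $e$ lies on no cycle, so $|E(H)|\ge 4\,\cy(H)+1$, giving $\ex(H)\ge 1$ and strict inequality in item 3.

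The only mildly delicate point — the ``main obstacle,'' though it is not severe — is justifying that for a cactus the cycle rank equals $\cy(H)$, i.e. that the cycles form a basis of the cycle space. This follows from the defining property quoted in the excerpt (no edge is shared by two cycles): each cycle contributes an edge belonging to no other cycle, so the cycle-space vectors are linearly independent, and a dimension count (or the block decomposition, in which each cyclic block contributes exactly one to the rank) shows they span. Everything else is bookkeeping with the bipartite even-cycle-length fact and the Euler-type relation of item 1.
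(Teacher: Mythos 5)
Your proof is correct, and items 2--4 follow the paper's argument verbatim (nonnegativity of $\ex(H)$ from edge-disjoint cycles of length $\ge 4$, substitution into item 1, and the equality analysis forcing $\ex(H)=0$, $\cc(H)=1$ with the bridge/long-cycle exclusion). The only genuine divergence is in item 1: the paper gets the relation $|V(H)|=|E(H)|-\cy(H)+\cc(H)$ in one line by viewing the cactus as a planar graph with exactly $\cy(H)+1$ faces and applying the generalized Euler formula, whereas you derive it from the cycle-space rank $|E|-|V|+\cc$, arguing that the $\cy(H)$ cycles are independent (each owns a private edge) and span (every cycle of a cactus is a block, or equivalently each cyclic block contributes dimension one). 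Your route avoids appealing to planarity and to the unproved claim that a cactus embedding has $\cy(H)+1$ faces -- which is really the same edge-disjointness fact in disguise -- at the cost of having to justify spanning, which you correctly identify as the one delicate point and settle via the block decomposition. Both arguments are standard and of comparable length; either suffices for the lemma.
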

\begin{proof}
\begin{enumerate}

\item Since $H$ is a planar graph with $\cy(H)+1$ faces and $\cc(H)$ connected components, the equality follows from the generalization of Euler's Formula:
$$(\cy(H)+1)+|V(H)|=|E(H)|+(\cc(H)+1).$$

\item All cycles of a bipartite graph have at least 4 edges, hence $\ex (H)\ge 0$. By the preceding item,
$$|V(H)|=|E(H)|-\cy(H)+\cc (H)=|E(H)|-\frac 14 (|E(H)-\ex (H))+\cc (H)=
\frac 34 |E(H)|+\frac 14 \ex (H) +\cc (H).$$

\item It  immediately follows   from the preceding item.

\item  Observe first that if $H$ is connected and all blocks are cycles of order $4$, then $\cc (H)=1$ and $|E(H)|=4\, \cy(H)$. Hence, $\ex (H)=|E(H)|-4\, \cy(H)=0$ and by item 2,
$|V(H)|= \frac 34 |E(H)|+1.$

Conversely, suppose that $|V(H)|= \frac 34 |E(H)|+1.$
The graph  $H$ must be connected, since otherwise
  $|V(H)|= \frac 34 |E(H)|+\frac 14 \ex (H) +\cc (H)\ge \frac 34 |E(H)| +2$.
	On the other hand, if $H$ contains a cycle of order at least $6$ or a bridge, then $\ex (H)=|E(H)|-4\, \cy(H)>0$, implying that
  $|V(H)|= \frac 34 |E(H)|+\frac 14 \ex (H) +\cc (H)> \frac 34 |E(H)| +\cc (H)=\frac 34 |E(H)| +1$.
\end{enumerate}
\vspace{-.8cm}\end{proof}

\begin{proposition}\label{prop.complementariguanya}
 If $ r \ge 3$ and $\lambda (\overline{G})=\lambda (G)+1$, then $\frac{3r}2\le s\le 2^r-1$.
\end{proposition}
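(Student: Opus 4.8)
The plan is to squeeze $r$ and $s$ between the two bounds using the machinery of the $U$‑associated graph built in Lemmas~\ref{lem.exist}, \ref{claim.subgraphcactus} and \ref{lemacactus}. The upper bound $s\le 2^r-1$ is nothing new: it is exactly the content of Corollary~\ref{corin}, which also tells us that, since $\lambda(\overline G)=\lambda(G)+1$, we may assume $U$ is an LD-code of $G$ — the unique one when $r<s$, and a non-global one when $r=s$. So the real task is the lower bound $s\ge \tfrac{3r}{2}$.

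First I would apply Lemma~\ref{lem.exist}: for every $u\in U$ the graph $G^{U}$ contains at least two edges with label $u$. Since each edge of $G^{U}$ carries a single label, selecting exactly two edges with label $u$ for each of the $r$ vertices of $U$ produces a subgraph $H\subseteq G^{U}$ with $|E(H)|=2r$ and no isolated vertices. By Proposition~\ref{claims123}(2) the graph $G^{U}$, and hence $H$, is bipartite; and $|V(H)|\ge 4$ because a simple graph on at most three vertices has at most $3<2r$ edges (recall $r\ge 3$). Lemma~\ref{claim.subgraphcactus} then guarantees that every connected component of $H$ is a cactus, so Lemma~\ref{lemacactus}(3) yields
$$|V(H)|\ \ge\ \tfrac34\,|E(H)|+1\ =\ \tfrac{3r}{2}+1.$$

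To finish I would compare this with the size of the ambient vertex set. Taking $S=U$ in Definition~\ref{def.Gomega} gives $V(G^{U})=(V\setminus U)\cup\{z\}=W\cup\{z\}$, so $|V(G^{U})|=s+1$ and therefore $|V(H)|\le s+1$. Combining the two displays gives $\tfrac{3r}{2}+1\le s+1$, that is, $s\ge\tfrac{3r}{2}$, which together with $s\le 2^r-1$ from Corollary~\ref{corin} completes the proof.

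As for where the difficulty lies: essentially all of it has already been discharged into Lemmas~\ref{lem.exist}, \ref{claim.subgraphcactus} and \ref{lemacactus}, so what remains is careful bookkeeping. The two points that genuinely need attention are (i) verifying that the "two edges per label" subgraph $H$ has precisely $2r$ edges and meets the hypotheses of each lemma — in particular that it is bipartite and has order at least $4$ — and (ii) making sure the reduction to "$U$ is an LD-code" through Corollary~\ref{corin} is legitimate in the borderline case $r=s$.
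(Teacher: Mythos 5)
Your proposal is correct and follows essentially the same route as the paper: Corollary \ref{corin} for the upper bound and the assumption that $U$ is a (non-global) LD-code, then a subgraph $H$ of $G^U$ with exactly two edges per label (Lemma \ref{lem.exist}), whose components are cactus (Lemma \ref{claim.subgraphcactus}), so that Lemma \ref{lemacactus} gives $s+1\ge |V(H)|\ge \frac34(2r)+1$. Your extra checks (that $|E(H)|=2r$ since labels are distinct, and that $H$ has order at least $4$) are minor bookkeeping the paper leaves implicit.
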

\begin{proof}
By Corollary \ref{corin}, we have that $s\le 2^r-1$, and we may assume that $U$ is a non-global LD-code and there is no LD-code with vertices in both stable sets.

Consider a subgraph $H$ of $G^{U}$ with exactly two edges with label $u$ for any $u\in U$.
The graph $H$  is bipartite since it is a subgraph of $G^{U}$ and by Lemma  \ref{lemacactus},
$$s+1=|V(G^U)| \ge |V(H)| \ge \frac 34 \, |E(H)| + 1 =\frac 34 \, (2r) + 1= \frac {3r}2 +1 $$
 and consequently $s\ge \frac {3r}2$.
\end{proof}

\begin{lemma}\label{ultimlema}
If $\lambda (\overline{G})= \lambda (G)+1$ and $U$ is an LD-code of $G$, let $z$ be the vertex of $G^{U}$ introduced in Definition \ref{def.Gomega} and let $H$ be a subgraph of $G^{U}$ with exactly two edges with label $u$, for each $u\in U$.
Then the following holds.
\begin{enumerate}

\item If $H$ has at least two connected components, then $s\ge \frac {3r} 2+1$.

\item If $\deg_{G^U} (z)=0$, then $s\ge \frac {3r} 2+1$.

\item  $\deg_{G^U} (z)\not= 0$ if and only if there is at least a vertex in $W$ of degree 1 in $G$.

\item If $G$ has no vertex of degree 1 in $W$, then  $s\ge \frac {3r} 2+1$.

\end{enumerate}

\end{lemma}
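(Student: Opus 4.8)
The plan is to reduce all four statements to the counting identities of Lemma~\ref{lemacactus} applied to $H$, together with the structural fact from Lemma~\ref{claim.subgraphcactus} that every connected component of $H$ is a cactus. First I would record the standing facts: since $U$ is an LD-code, $|U|=r$, and by Proposition~\ref{claims123}(1) we have $|V(G^U)|=n-r+1=s+1$; moreover $H$ is a subgraph of the bipartite graph $G^U$, hence bipartite, with exactly $|E(H)|=2r\ge 6$ edges, so $|V(H)|\ge 4$ and Lemma~\ref{lemacactus} applies. The common mechanism behind items~1,~2 and~4 is that any ``spare'' vertex of $G^U$ not used by $H$ improves the bound $s\ge\frac{3r}2$ of Proposition~\ref{prop.complementariguanya} by one.

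For item~1 I would invoke Lemma~\ref{lemacactus}(2): $|V(H)|=\tfrac34|E(H)|+\tfrac14\,\ex(H)+\cc(H)=\tfrac{3r}2+\tfrac14\,\ex(H)+\cc(H)$. Since $\ex(H)\ge 0$ and, by hypothesis, $\cc(H)\ge 2$, this gives $|V(H)|\ge \tfrac{3r}2+2$, and combining with $|V(H)|\le|V(G^U)|=s+1$ yields $s\ge \tfrac{3r}2+1$. For item~2 the point is that if $\deg_{G^U}(z)=0$ then $z$ lies on no edge of $G^U$, hence on no edge of $H$; as $H$ is edge-induced it omits isolated vertices, so $z\notin V(H)$ and $|V(H)|\le s$. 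Then Lemma~\ref{lemacactus}(3) gives $s\ge|V(H)|\ge\tfrac34|E(H)|+1=\tfrac{3r}2+1$.

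Item~3 is a one-line computation in $G^U$: for $x\in W=V\setminus U$ we have $N_G(x)\subseteq U$, so the adjacency rule $|(N_G(x)\cap U)\bigtriangleup(N_G(z)\cap U)|=1$ together with $N_G(z)=\emptyset$ reads $|N_G(x)|=1$; hence $z$ has a neighbour in $G^U$ exactly when some vertex of $W$ has degree $1$ in $G$. Item~4 then follows at once by chaining items~3 and~2. I do not expect any real obstacle here; the only delicate point, and the one that must be stated with care, is that an edge-induced subgraph contains only the endpoints of its edges, so that $z$ (or an entire spare component's worth of vertices) is genuinely absent from $V(H)$ — this is precisely what produces the strict improvement $s\ge\frac{3r}2+1$ over Proposition~\ref{prop.complementariguanya}.
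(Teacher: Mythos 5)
Your proposal is correct and follows essentially the same route as the paper: items 1 and 2 via the counting identity/inequality of Lemma~\ref{lemacactus} applied to the cactus subgraph $H$ (with $|E(H)|=2r$ and $|V(H)|\le s+1$, resp.\ $|V(H)|\le s$ when $z\notin V(H)$), item 3 by the adjacency rule of $G^U$ with $N_G(z)=\emptyset$, and item 4 by combining items 2 and 3. Your explicit remark that $H$ is edge-induced (so $z$ is genuinely absent when $\deg_{G^U}(z)=0$) is a point the paper leaves implicit, but it is the same argument.
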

\begin{proof}
\begin{enumerate}

\item By Lemma \ref{lemacactus},
$s+1\ge |V(H)| =\frac 34 |E(H)|+\frac 14 \ex (H) +\cc (H)\ge \frac 34 |E(H)|+2=\frac {3r}2+2$,
and thus, $s\ge \frac {3r}2+1$.

\item If $\deg_{G^U} (z)=0$, then $z$ is not a vertex of $H$.
Hence,
$s\ge |V(H)| =\frac 34 |E(H)|+\frac 14 \ex (H) +\cc (H)\ge \frac 34 |E(H)|+1=\frac {3r}2+1$.

\item We know that $\deg_{G^U} (z)\not= 0$ if and only if there is a vertex $w\in W$ satisfying  $N(w)\bigtriangleup N(z)=N(w)\bigtriangleup \emptyset = \{ u\}$, i.e. if and only if $\deg_G(w)=1$.

\item It is a straight consequence of items 2 and 3.

\end{enumerate}
\vspace{-1cm}\end{proof}

\begin{proposition}\label{tontito}
There are no bipartite graphs $G$ satisfying $\lambda (\overline{G})=\lambda (G)+1$ if $\frac{3r}2\le s<\frac{3r}2+1$.
\end{proposition}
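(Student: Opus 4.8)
The plan is to argue by contradiction, pushing the counting behind Proposition~\ref{prop.complementariguanya} to its extreme so as to pin down the shape of $G^{U}$, and then to exhibit by hand a small locating-dominating set of $\overline{G}$. So suppose $G$ is bipartite with $\lambda(\overline{G})=\lambda(G)+1$ and $\frac{3r}{2}\le s<\frac{3r}{2}+1$. By Proposition~\ref{pro.r1o2} we may assume $r\ge 3$, and then $s$ is the unique integer in $[\frac{3r}{2},\frac{3r}{2}+1)$: namely $s=3r/2$ if $r$ is even and $s=(3r+1)/2$ if $r$ is odd. Since $r<s$, Corollary~\ref{corin} makes $U$ the unique LD-code of $G$, and it must be non-global (otherwise $\lambda(\overline{G})\le\lambda(G)$ by Corollary~\ref{cori}); hence by Proposition~\ref{pro.vertexdom} there is a unique vertex $v\in W$ with $N_G(v)=U$, so in $G^{U}$ the vertex $z$ is the only vertex at level $0$ and $v$ is the only vertex at level $r$.

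Next I would fix a subgraph $H$ of $G^{U}$ containing exactly two edges with each label $u\in U$ (it exists by Lemma~\ref{lem.exist}), so that $|E(H)|=2r$, and recall from Lemma~\ref{lemacactus} that $|V(H)|\ge\frac34|E(H)|+1=\frac{3r}{2}+1$. As $|V(G^{U})|=s+1$ is an integer with $s<\frac{3r}{2}+1$, this forces $|V(H)|=|V(G^{U})|=s+1$; in particular $V(H)=V(G^{U})$, so both $z$ and $v$ lie in $H$. By Lemma~\ref{ultimlema}(1) the cactus $H$ is connected, and then Lemma~\ref{lemacactus}(2) with $\cc(H)=1$ gives $\ex(H)=4s-6r$, which equals $0$ when $r$ is even and $2$ when $r$ is odd. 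Thus, when $r$ is even every block of $H$ is a $4$-cycle, and when $r$ is odd all blocks are $4$-cycles except either one $6$-cycle or two bridge-blocks.

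The heart of the argument is a level count in the connected cactus $H$. Since the two ends of any edge of $G^{U}$ sit on consecutive levels, a $4$-cycle block occupies at most three consecutive levels, a $6$-cycle at most four, and a bridge exactly two; and because level $0$ contains only $z$, the block of $H$ through $z$ cannot be horizontal — it must climb to level $2$ (a square) or level $3$ (the hexagon) — and symmetrically for the block through $v$ at the top. Walking the block–cut tree of $H$ from $z$'s block to $v$'s block, consecutive blocks share a single cut vertex, so the level $r$ attained at $v$ is at most the sum over the blocks along this walk of the climb each contributes, hence at most $2$ times the number of $4$-cycles plus $3$ times the number of $6$-cycles plus the number of bridges along the walk. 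Comparing this with the blocks actually available forces the block–cut tree to be a path and each block to climb by its maximum amount: for $r$ even, $G^{U}=H$ is then a \emph{chain of $r/2$ squares} glued successively at cut vertices, with $z$ at the bottom and $v$ at the top; for $r$ odd one gets the analogous chain with one square replaced by a hexagon or with the two bridges inserted (a short case split). Along such a chain all edge labels are distinct, so reading them off from $z$ upward determines every trace $N_G(w)\cap U$, $w\in W$, hence $G$ up to isomorphism.

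The last step is the explicit contradiction. In the chain-of-squares case, let $a,b$ be the two labels of the bottom square and let $w_a,w_b\in W$ be the degree-$1$ vertices with $N_G(w_a)\cap U=\{a\}$ and $N_G(w_b)\cap U=\{b\}$. Using $N_{\overline{G}}(x)\cap S=S\setminus N_G(x)$, one checks that $S:=(U\setminus\{a,b\})\cup\{w_a,w_b\}$ is a dominating set of $\overline{G}$ and that the sets $N_{\overline{G}}(x)\cap S$ are pairwise distinct over all $x$ outside $S$ — a finite verification once the chain is spelled out. Since $|S|=|U|=\lambda(G)$, this yields $\lambda(\overline{G})\le\lambda(G)$, contradicting $\lambda(\overline{G})=\lambda(G)+1$; the $r$-odd cases follow the same pattern after a controlled adjustment of $S$ to account for the hexagon or for the bridges. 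The step I expect to be most delicate is making the block–cut-tree level count airtight — in particular ruling out side-attachments of the stray $6$-cycle or bridge blocks when $r$ is odd — and then checking that the exhibited set $S$ is locating in $\overline{G}$ across the handful of remaining structural sub-cases.
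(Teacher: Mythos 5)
Your overall strategy is the same as the paper's: use Lemmas~\ref{lem.exist}, \ref{claim.subgraphcactus}, \ref{lemacactus} and \ref{ultimlema} to force the spanning subgraph $H$ of $G^U$ to be a single connected cactus with $V(H)=V(G^U)$, $\ex(H)=0$ or $2$, with $z$ at level $0$ and the dominating vertex at level $r$, conclude that $H$ is a chain, and then exhibit an explicit $r$-vertex LD-set of $\overline{G}$ to contradict $\lambda(\overline{G})=\lambda(G)+1$. In the even case your endgame is sound: with $H$ a chain of diamonds the traces are $\emptyset,\{a\},\{b\},\{a,b\},\{a,b,a_2\},\dots,U$, and one checks that $S=(U\setminus\{a,b\})\cup\{w_a,w_b\}$ is indeed an LD-set of $\overline{G}$ of size $r$ (every vertex of $W\setminus S$ sees both $w_a,w_b$ in $\overline{G}$, while $a$ and $b$ each miss exactly one of them, and $U\setminus\{a,b\}$ separates $W\setminus\{w_a,w_b\}$).

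There are, however, two genuine gaps. First, your claim that the extremal level count pins the structure down ``up to isomorphism'' and that side-attachments can be ruled out is false in the odd, two-bridge case: there the climb budget is $2\cdot\frac{r-1}{2}+1+1=r+1$, one more than needed, so one bridge may hang off the chain (this is exactly the configuration the paper retains in its case~(iii), with both bridges carrying the same label). So that configuration cannot be eliminated and must be dealt with in the final verification. Second, the odd-$r$ endgame is precisely where the work lies and you leave it as ``a controlled adjustment'': the paper handles all three positions of $z$ (in a $4$-cycle, in a $6$-cycle, on a bridge) uniformly by taking $S=(U\setminus\{a\})\cup\{w_0\}$ with $a$ a label of an edge at the bottom of the chain and $w_0$ a suitable low-level vertex, and verifying directly that $S$ is an LD-set of $\overline{G}$; your proposal would need an analogous explicit check for the hexagon and pendant-bridge configurations. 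A smaller slip: ``along such a chain all edge labels are distinct'' is wrong — each label of $U$ appears exactly twice in $H$ (on opposite edges of one $4$-cycle, suitably in the hexagon, or on the two bridges); the traces are nonetheless determined because $H$ is connected and spans $V(G^U)$ with $z$ of trace $\emptyset$, so this is only a mis-statement, not a fatal error.
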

\begin{proof}
Suppose on the contrary that $G$ is a bipartite graph satisfying the conditions of the proposition.
Condition $\lambda (\overline{G})=\lambda (G)+1$ implies that we may assume that $U$ is an LD-code of $G$, there is no LD-code with vertices in both stable sets and $U$ is not an LD-set of $\overline{G}$.
Consider a subgraph $H$ of $G^U$ with exactly two edges with label $u$, for each $u\in U$ (it exists by Lemma \ref{lem.exist}).

Observe that the inequality is only possible for
$s=\frac{3r}2$, whenever $r$ is even, and for $s=\frac{3r+1}2$, whenever $r$ is odd.
If $r$ is even and $s=\frac{3r}2$, then
$$\frac {3r}2+1=s+1=|V(G^U)|\ge |V(H)|=\frac 34 |E(H)|+\frac 14 \ex (H)+\cc (H)=\frac {3r}2 +\frac 14 \ex (H)+\cc (H).$$
Since $\ex (H)\ge 0$ and $\cc (H)\ge 1$, this is only possible for $\ex (H)=0$, $\cc (H)=1$, and $V(G^U)=V(H)$.
By Lemma \ref{lemacactus}, $H$ is a cactus with all blocks cycles of order 4, concretely, $\frac r2$ cycles.
If $r$ is odd and $s=\frac{3r+1}2$, then
$$\frac{3r}2+\frac 24 +1=\frac {3r+1}2+1=s+1=|V(G^U)|\ge |V(H)|=\frac 34 |E(H)|+\frac 14 \ex (H)+\cc (H)=\frac {3r}2 +\frac 14 \ex (H)+\cc (H).$$
This is only possible for $\ex (H)=2$, $\cc (H)=1$, and  $V(G^U)=V(H)$.
By Lemma \ref{lemacactus}, $H$ is a cactus with exactly $\frac {r-1}2$ cycles: $\frac {r-1}2-1$ cycles of order 4 and a cycle of order 6, or $\frac {r-1}2$ cycles of order 4 and two bridges.

We also know that  condition $\lambda (\overline{G})=\lambda (G)$ implies the existence of a vertex $w^*\in V(G)\subseteq V(G^U)=V(H)$ such that $N_G(w^*)=U$, i.e., $H$ has a vertex at the highest level.
Lemma \ref{ultimlema} allows us to conclude  that $H$ is connected and $z\in V(H)$.
Thus, $H$ must be a chain of cycles of order 4, or a chain of a cycle of order 6 and cycles of order 4, or a chain of a bridge and cycles of order 4, plus another bridge hanging from a vertex of this chain, with both bridges having the same label and, by Proposition \ref{claims123}, not lying in a path with all vertices at different levels (see Figure \ref{fig.chain}).

\begin{figure}[!hbt]
\begin{center}
\includegraphics[width=0.25\textwidth]{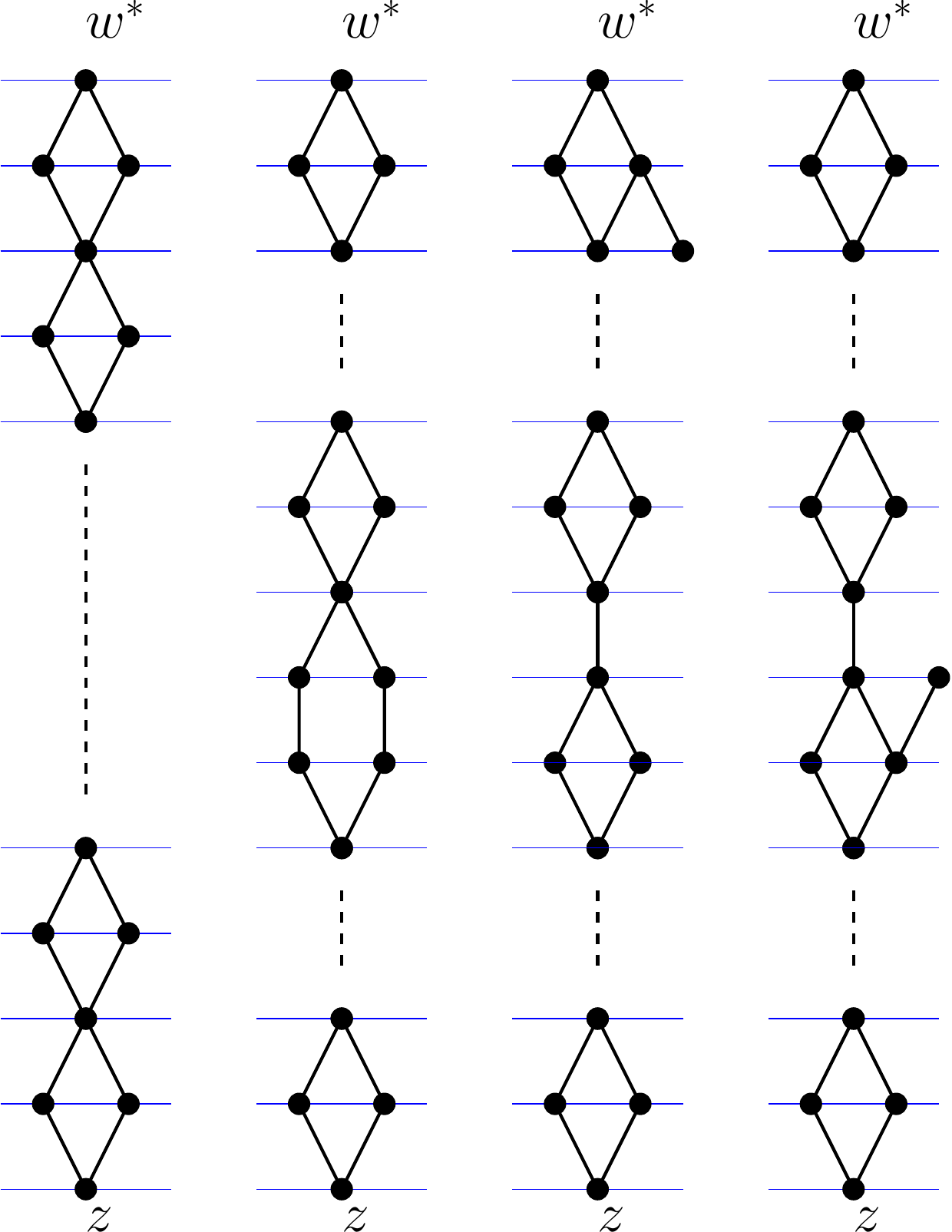}
\caption{Examples of subgraphs $H$.}\label{fig.chain}
\end{center}
\end{figure}

In consequence, one of the following cases holds in $H$:
(i) $z$ belongs to a cycle $C$ of order $4$;
(ii) $z$ belongs to a cycle $C$ of order $6$;
(iii) $z$ belongs to a bridge, $e$. In this case, there is no $x-z$ path of length $i$ in $H$ with consecutive vertices in levels $i,i-1,\dots ,1,0$ respectively containing both edges of $H$ with label $\ell (e)$.
We may assume w.l.o.g. that the labels $a,b,c\in U$ of the edges of $C$ and $e$ are those of Figure \ref{fig.noExisteix}.
Let $w_0$ be the vertex of $G$ indicated in the same figure.

\begin{figure}[!hbt]
\begin{center}
\includegraphics[width=0.4\textwidth]{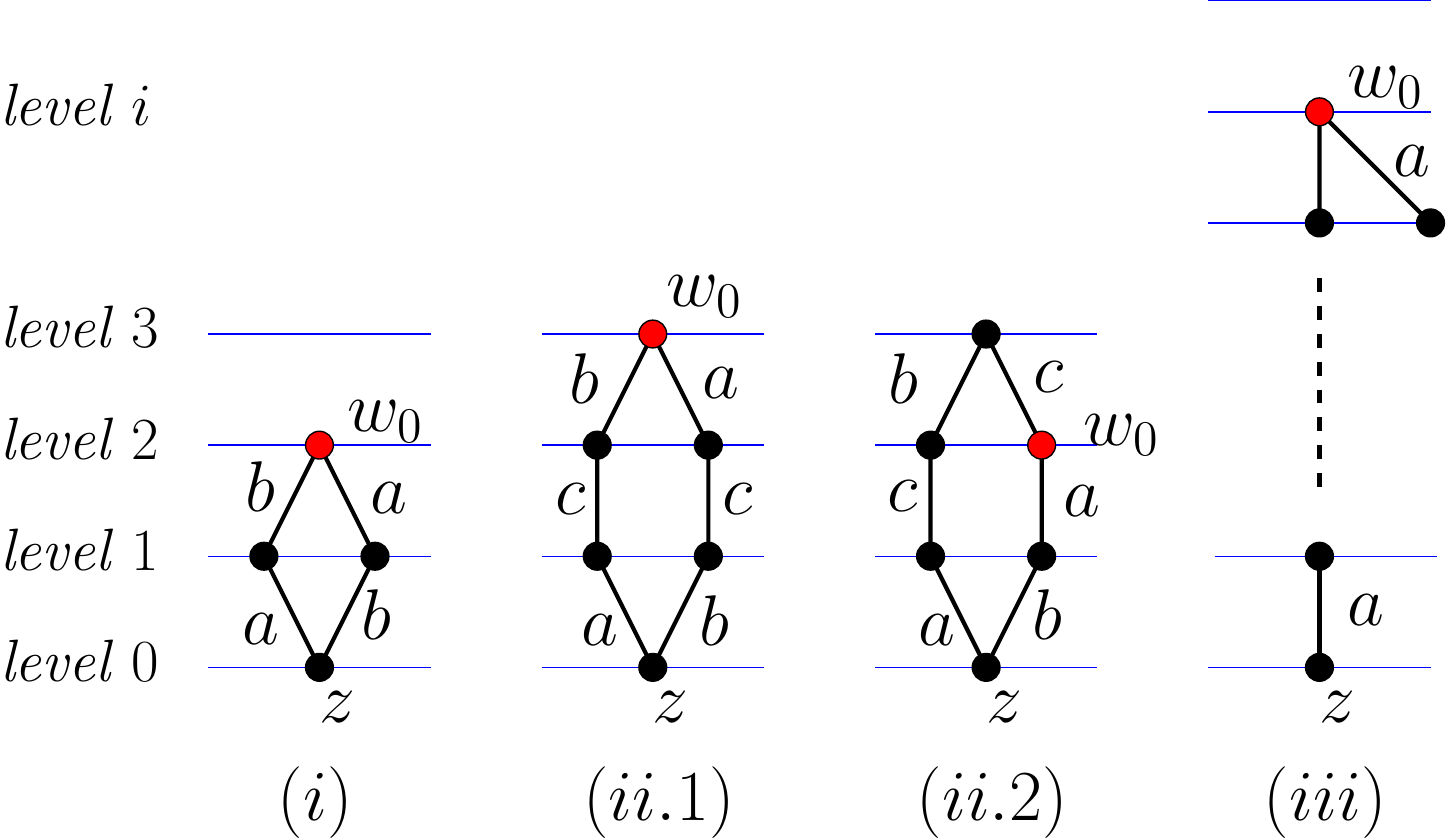}
\caption{Possible cases for vertex $z$ in subgraph $H$.}\label{fig.noExisteix}
\end{center}
\end{figure}

We claim that the set $S=(U\setminus \{ a \})\cup \{ w_0 \}$ is an LD-set of $\overline{G}$ with exactly $r$ vertices.
Indeed, if $w_0\not= w^*$, then $N_{\overline{G}}(a)\cap S=S\setminus \{ w_0 \}$, $N_{\overline{G}}(w^*)\cap S=\{w_0\}$ and for any $x\in W\setminus \{ w^*,w_0\}$, $N_{\overline{G}}(x)\cap S=\{w_0\}\cup S'$, where $S'=U\setminus (N_G(x)\cup \{ a\})\not= \emptyset$, since $N_G(x)\not= U\setminus \{ a \}$.
Moreover, for any pair of different vertices $x,y\in W\setminus \{ w^*,w_0\}$, $N_G(x)\cap (U\setminus \{ a \})\not= N_G(y)\cap (U\setminus \{ a \})$, implies that $N_{\overline{G}}(x)\cap S\not= N_{\overline{G}}(y)\cap S$.
If $w_0=w^*$, then $N_{\overline{G}}(a)\cap S=S\setminus \{ w^* \}$,
and for any $x\in W\setminus \{ w^*\}$, $N_{\overline{G}}(x)\cap S=\{w^*\}\cup S'$, where $S'=U\setminus (N_G(x)\cup \{ a\})$.
Moreover, for any pair of different vertices $x,y\in W\setminus \{ w^*\}$, $N_G(x)\cap (U\setminus \{ a \})\not= N_G(y)\cap (U\setminus \{ a \})$, implies that $N_{\overline{G}}(x)\cap S\not= N_{\overline{G}}(y)\cap S$.
\end{proof}

\begin{proposition}\label{prop.construccio}
  For every pair $(r,s)$, $r,s\in \mathbb{N}$, such that $3\le r$ and $\frac{3r}2 +1\le s\le 2^r-1$, there exists a bipartite graph $G(r,s)$ such that $\lambda (\overline{G})=\lambda (G)+1$.
\end{proposition}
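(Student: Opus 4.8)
The plan is to exhibit, for every admissible pair $(r,s)$, an explicit connected bipartite graph $G=G(r,s)$ with stable sets $U=\{u_1,\dots,u_r\}$ and $W$ with $|W|=s$, such that $\lambda(G)=r$ and $\lambda(\overline G)=r+1$; since $|\lambda(G)-\lambda(\overline G)|\le 1$ by Theorem~\ref{cor.difuno}, it then suffices to prove the single inequality $\lambda(\overline G)\ge r+1$. Note first that $\frac{3r}{2}+1\le s$ already forces $r<s$, so we are always in the generic situation of Corollary~\ref{corin}. I would describe $G$ by prescribing, for every $w\in W$, a nonempty neighborhood $N_G(w)\subseteq U$, with all of these sets pairwise distinct (equivalently, by prescribing $G^{U}$). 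A natural skeleton for $W$ is: $r$ \emph{pendant} vertices $p_1,\dots,p_r$ with $N_G(p_i)=\{u_i\}$; one \emph{apex} vertex $w^{*}$ with $N_G(w^{*})=U$; and $s-r-1$ further vertices with pairwise distinct neighborhoods of size between $2$ and $r-1$. This last choice is available precisely because $0\le s-r-1\le 2^{r}-r-2$, the right-hand side being exactly the number of subsets of $U$ of size between $2$ and $r-1$. The $s-r-1$ extra neighborhoods must be selected carefully, with a case split according to the parity of $r$ and the position of $s$ in $[\frac{3r}{2}+1,\,2^{r}-1]$, so that $G^{U}$ satisfies the necessary condition of Lemma~\ref{lem.exist} (every $u\in U$ labels at least two edges) while destroying the rigidity used in Proposition~\ref{tontito} to manufacture a small LD-set of $\overline G$.

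The routine verifications come first. By construction $G$ is bipartite with the prescribed parts, and it is connected because $w^{*}$ is adjacent to all of $U$, each $p_i$ to $u_i$, and every other vertex of $W$ to some vertex of $U$. Since the sets $N_G(w)\cap U=N_G(w)$, $w\in W$, are nonempty and pairwise distinct, $U$ is an LD-set and $\lambda(G)\le r$; conversely every LD-set of $G$ meets each of the $r$ pairwise disjoint pairs $\{u_i,p_i\}$, as otherwise $p_i$ would have no neighbor in it, so $\lambda(G)\ge r$. Hence $\lambda(G)=r$; and because $w^{*}$ dominates $U$, Proposition~\ref{pro.vertexdom} shows that $U$ is moreover a non-global LD-code, in accordance with the expected value $\lambda_g(G)=r+1$.

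The core of the proof is the lower bound $\lambda(\overline G)\ge r+1$. I would assume for contradiction that $S'$ is an LD-set of $\overline G$ with $|S'|\le r$, and set $A=S'\cap U$, $a=|A|$, $b=|S'\cap W|$, so $a+b\le r$. In $\overline G$ both $U$ and $W$ induce cliques, hence every $w\in W\setminus S'$ is adjacent in $\overline G$ to all of $S'\cap W$; consequently two vertices $w,w'\in W\setminus S'$ have equal trace on $S'$ exactly when $N_G(w)\cap A=N_G(w')\cap A$, so that $s-b\le D_A$, where $D_A=|\{\,N_G(w)\cap A:w\in W\,\}|$. Together with $b\le r-a$ this yields $s\le D_A+r-a$. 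For $a\le 2$ one has $D_A\le 2^{a}$, so $s\le 2^{a}+r-a\le r+2<\frac{3r}{2}+1\le s$ (using $r\ge 3$), a contradiction; and the case $a=r$ is excluded because then $S'=U$, which is not an LD-set of $\overline G$. The remaining range $3\le a\le r-1$ is the delicate one: the aim is to show that the chosen extra neighborhoods force $D_A\le s-r+a-1$ for every $A$ with $3\le|A|\le r-1$; the pendants and the apex already contribute $D_A\ge a+2$, with $a+2\le s-r+a-1$ since $s\ge r+3$ throughout the admissible range, so the whole issue is to ensure that the $s-r-1$ extras never push $D_A$ above $s-r+a-1$. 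This is precisely where the additional slack $s\ge\frac{3r}{2}+1$ is consumed (compare $s=\frac{3r}{2}$ or $\frac{3r+1}{2}$ in Proposition~\ref{tontito}, or $s<\frac{3r}{2}$ in Proposition~\ref{prop.complementariguanya}), and the main obstacle will be to pin down a family of extra neighborhoods that works uniformly, or a short list of cases covering the whole admissible range. Everything else amounts to running the machinery of this section in reverse.
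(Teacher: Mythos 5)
Your setup is fine as far as it goes: the skeleton is well defined, the verification that $\lambda(G)=r$ via the disjoint pairs $\{u_i,p_i\}$ is correct, and the reduction of $\lambda(\overline G)\ge r+1$ to bounding the number $D_A$ of distinct traces $N_G(w)\cap A$ (together with the disposal of $a\le 2$ and $a=r$) is sound. But the proof stops exactly where the content of the proposition begins: you never specify the $s-r-1$ extra neighborhoods, and you never prove the key inequality $D_A\le s-r+a-1$ for $3\le a\le r-1$; both are explicitly deferred (``the main obstacle will be to pin down a family\dots''). This is not a routine detail. Natural instantiations of your skeleton actually fail: take $r=4$, $s=7$, pendants $p_1,\dots,p_4$, apex $w^*$, and extras $x_{12},x_{34}$ with $N_G(x_{12})=\{u_1,u_2\}$, $N_G(x_{34})=\{u_3,u_4\}$. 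Then $S'=\{u_1,u_2,u_3,x_{34}\}$ is an LD-set of $\overline G$ of size $r$: the traces in $\overline G$ of the vertices outside $S'$ are $\{u_1,u_2,u_3\}$ for $u_4$, $\{u_2,u_3,x_{34}\}$, $\{u_1,u_3,x_{34}\}$, $\{u_1,u_2,x_{34}\}$, $\{u_1,u_2,u_3,x_{34}\}$ for $p_1,\dots,p_4$, $\{x_{34}\}$ for $w^*$, and $\{u_3,x_{34}\}$ for $x_{12}$ --- all distinct and nonempty. So $\lambda(\overline G)\le\lambda(G)$ for that graph, even though every $u\in U$ labels two edges of $G^U$, i.e.\ the necessary condition of Lemma~\ref{lem.exist} holds. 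Note also that your bound $D_A$ only records distinguishability inside $W$; whether \emph{any} family of extras compatible with the pendant-heavy skeleton can work at the minimum $s=\lceil 3r/2\rceil+1$ is unclear, since the pendants already force $a+2$ distinct traces and leave almost no room for the required collisions.

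For comparison, the paper's construction goes in the opposite direction: $W$ consists of the full set $[r]$, all complements of singletons, and a matching-like family of complements of pairs (adjusted for odd $r$), i.e.\ sets of large size rather than singletons; and the nonexistence of an $r$-element LD-set $S$ of $\overline G$ is not proved by an ad hoc trace count but by the Section~3 machinery: writing $|S\cap U|=r-k$, one takes the subgraph $H$ of $G^U$ spanned by the $2k$ edges whose labels lie in $U\setminus S$, observes that all but at most one vertex of each component of $H$ must belong to $S$ (because $W$ is a clique of $\overline G$), and applies Lemmas~\ref{claim.subgraphcactus} and~\ref{lemacactus} to get $|S|\ge (r-k)+\tfrac32 k>r$; larger $s$ is then handled by adding vertices with new distinct nonempty neighborhoods, which only adds edges to $G^U$ and does not disturb the count. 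To complete your argument you would have to either exhibit and verify a concrete family of extras (essentially rediscovering such a construction and count) or import that counting lemma; as written, the heart of the proposition is asserted rather than proved.
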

  \begin{proof}
    Let $s=\Big\lceil \frac{3r}2 +1 \Big\rceil$.
		Consider the bipartite graph $G(r,\Big\lceil \frac{3r}2 +1 \Big\rceil)$ such that $V=U\cup W$, $U=[r]=\{ 1,2,\dots ,r\}$, and
    $W\subseteq \mathcal{P}([r])\setminus \{ \emptyset \}$ is defined as follows.
		For $r=2k$ even:
\begin{align*}
      W = & \Big\{ [r] \Big\}
           \cup \Big\{ [r]\setminus \{ i \} : i\in [r] \Big\}
           \cup \Big\{ [r]\setminus \{ 2i-1,2i\} : 1\le i\le k \Big\}\\
    \end{align*}
and for $r=2k+1$  odd:
\begin{align*}
      W = & \Big\{ [r] \Big\}
     \cup \Big\{ [r]\setminus \{ i \} : i\in [r] \Big\}
     \cup \Big\{ [r]\setminus \{ 2i-1,2i\} : 1\le i\le k-1 \Big\}\\
          & \cup \Big\{ [r]\setminus \{ r-2,r-1 \}, [r]\setminus \{ r-1,,r \}, [r]\setminus \{ r-2,r-1,r \} \Big\}
    \end{align*}

\begin{figure}[!hbt]
\begin{center}
\includegraphics[height=3.0cm]{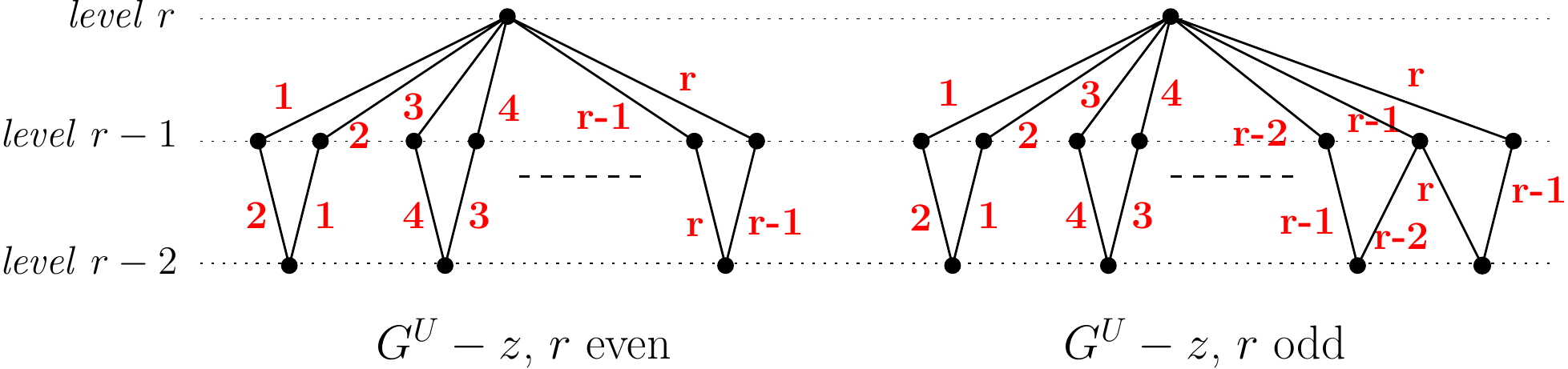}
\caption{The labeled graph $G^U-z$, for $G=G(r,\Big\lceil \frac{3r}2 +1 \Big\rceil)$ and $U=\{1, \dots , r \}$.}\label{fig.exemples}
\end{center}
\end{figure}

By construction, $U$ is an  LD-set of $G$ with $r$ vertices and by Corollary \ref{teoremon}, $U$ is not an LD-set of $\overline{G}$ (see in Figure \ref{fig.exemples} the $U$-associated graph, $G^U$).
We claim that there is no LD-set in $\overline{G}$ with at most $r$ vertices.

Suppose that $S$ is an LD-set of $\overline{G}$.
We already know that $S\not= U$.
Let us assume that $|S\cap U|=r-k$, $k\ge 1$.
Consider the subgraph $H$ of $G^U$ induced by $2k$ edges of $G^U$ with label $u\in U\setminus S$.
Notice that, by definition, this subgraph exists and $z\notin V(H)$.
Moreover, by Lemma \ref{claim.subgraphcactus}, all  connected components of $H$ are cactus.
Observe that, by definition of the associated graph $G^U$, the vertices lying at the same connected component of $H$ have the same neighborhood in $S\cap U$.
We know also that $W$ induces a complete graph in $\overline{G}$.
Therefore, at least all but one vertex of each connected component of $H$ must be in $S$.
By Lemma \ref{lemacactus}, this value is $$|V(H)|-\cc (H)=\frac 34 |E(H)|+\frac 14 \ex (H)=\frac 34 2k +\frac 14 \ex (H)=\frac 32 k + \frac 14 \ex (H)\ge \frac 32 k.$$
Hence, $|S|\ge (r-k)+ \frac 32 k=r+\frac 12 k>r$.

\emph{Remark.} We derive from this result that $\lambda (G)=r$.
Nevertheless, a direct proof of this fact can be given: it can be proved in a similar way that there is no LD-set of $G$ with less than $r$ vertices.

For $s>\lceil \frac{3r}2 +1 \rceil$, we can add up to $2^{r}-1-r$ vertices to the set $W$ of the  graph $G(r,\Big\lceil \frac{3r}2 +1 \Big\rceil)$ taking into account that the neighborhoods in $U$  of the vertices of $W$ must be different and non-empty.
\end{proof}

\begin{theorem}
  Let $r,s$ be a pair of integers such that $3\le r\le s$. 
  \begin{enumerate}
	
\item[(1)] There exists a bipartite graph $V(G)=U\cup W$ such that $|U|=r$, $|W|=s$ and $\lambda (\overline{G})=\lambda (G)-1$.

\item[(2)] There exists a bipartite graph $V(G)=U\cup W$ such that $|U|=r$, $|W|=s$ and $\lambda (\overline{G})=\lambda (G)$.
		
\item[(3)] There exist a bipartite graph $V(G)=U\cup W$ such that $|U|=r$, $|W|=s$ and  $\lambda (\overline{G})=\lambda (G)+1$ if and only if $\frac {3r}2+1 \le s \le 2^r-1$.

\end{enumerate}
	
\end{theorem}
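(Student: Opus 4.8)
The plan is to read off all three parts from the machinery already built in this section, supplementing it with the two explicit families recorded in Table~\ref{tab.valors}. For part~(2) I would take $G=K_{r,s}$, the complete bipartite graph with stable sets $U,W$ of sizes $r$ and $s$; then $\overline{G}=K_r\cup K_s$, so $\lambda(\overline{G})=\lambda(K_r)+\lambda(K_s)=(r-1)+(s-1)=r+s-2$, which by Proposition~\ref{donosti} (or by an immediate direct argument) equals $\lambda(K_{r,s})$, giving $\lambda(\overline{G})=\lambda(G)$ for all $3\le r\le s$. For part~(1) I would take $G$ to be the bi-star $K_2(r,s)$; being connected, its bipartition is unique, and its two parts --- the centre of one star together with the leaves of the other --- have sizes exactly $r$ and $s$, so the prescribed stable-set sizes are attained, while Proposition~\ref{donosti} gives $\lambda(K_2(r,s))=r+s-2$ and $\lambda(\overline{K_2(r,s)})=r+s-3$, i.e. $\lambda(\overline{G})=\lambda(G)-1$, again for all $3\le r\le s$. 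Note that $n=r+s\ge 6$ in both cases, so the hypotheses of the relevant table entries are met.

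For part~(3) the implication ``$\Leftarrow$'' is precisely Proposition~\ref{prop.construccio}, which produces, for every $(r,s)$ with $3\le r$ and $\frac{3r}{2}+1\le s\le 2^r-1$, a connected bipartite graph with stable sets of sizes $r$ and $s$ and $\lambda(\overline{G})=\lambda(G)+1$. For ``$\Rightarrow$'', let $G$ be a bipartite graph (connected, by the standing assumption of this section) with stable sets $U,W$, $3\le r=|U|\le|W|=s$, satisfying $\lambda(\overline{G})=\lambda(G)+1$. Proposition~\ref{prop.complementariguanya} already yields $\frac{3r}{2}\le s\le 2^r-1$, and Proposition~\ref{tontito} rules out every value of $s$ in the half-open interval $[\frac{3r}{2},\frac{3r}{2}+1)$. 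Since $s\in\mathbb{N}$, these two facts together force $\frac{3r}{2}+1\le s\le 2^r-1$: when $r$ is even $\frac{3r}{2}+1$ is itself an integer, and when $r$ is odd the unique integer in the excluded interval is $\frac{3r+1}{2}$, so $s\ge\frac{3r+3}{2}=\lceil\frac{3r}{2}+1\rceil$. This closes the equivalence in part~(3).

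The argument is essentially a matter of quoting the right earlier results, so there is no deep obstacle; the one place that needs genuine care is the integrality step in ``$\Rightarrow$'' of part~(3), where one must verify in both parity cases that ``$s\ge\frac{3r}{2}$ but $s\notin[\frac{3r}{2},\frac{3r}{2}+1)$'' really tightens to the clean bound $s\ge\frac{3r}{2}+1$, and then observe that this is consistent with (indeed strictly stronger than) the hypothesis $r\le s$ --- in particular part~(3) can never be realised with $r=s$. A minor secondary check is that the two families used for parts~(1) and~(2) genuinely have stable sets of the prescribed sizes throughout the whole range $3\le r\le s$, which is immediate from their being connected.
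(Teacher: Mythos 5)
Your proposal is correct and takes essentially the same route as the paper: the bi-star $K_2(r,s)$ for part (1), the biclique $K_{r,s}$ for part (2), and part (3) deduced from Propositions \ref{prop.complementariguanya}, \ref{tontito} and \ref{prop.construccio}. The only difference is that you spell out details the paper leaves implicit (the component-wise computation of $\lambda(\overline{K_{r,s}})$ and the parity/integrality step tightening $s\ge\frac{3r}{2}$, $s\notin[\frac{3r}{2},\frac{3r}{2}+1)$ to $s\ge\frac{3r}{2}+1$), which is a welcome but not substantively different addition.
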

\begin{proof}
To prove item (1), take the bi-star $K_2({r,s})$ and check that $\lambda (K_2({r,s}))=r+s-2$ and $\lambda (\overline{K_2({r,s})})=r+s-3$.
To prove item (2), take the biclique $K_{r,s}$ and check that $\lambda (K_{r,s})=\lambda (\overline{K_{r,s}})=r+s-2$.
Finally, observe that item (3) is a corollary of Propositions \ref{prop.complementariguanya}, \ref{tontito} and  Proposition \ref{prop.construccio}.
\end{proof}






\end{document}